\theoremstyle{plain}
\newtheorem{thm}{Theorem}[section]
\newtheorem*{thm*}{Theorem}
\newtheorem{prop}[thm]{Proposition}
\newtheorem{cor}[thm]{Corollary}
\newtheorem*{cor*}{Corollary}
\newtheorem{lem}[thm]{Lemma}
\theoremstyle{definition}
\newtheorem{defn}[thm]{Definition}
\newtheorem{exm}[thm]{Example}
\theoremstyle{remark}
\newtheorem{rmk}[thm]{\bf Remark}
\newcommand{\Ker}{\mathrm{Ker}}
\begin{document}
\title[]{a note on path $A_{\infty}$-algebras of positively graded quivers}
\author[]{Hao Su}
\subjclass[2010]{18E30,16E45}
\thanks{Supported by the National Nature Science Foundation of China, no 11431010 and 11571329.}
\address{School of Mathematical Sciences \\
         University of Science and Technology of China \\
         Hefei, Anhui 230026 \\
         P. R. China}
\email{suhao@mail.ustc.edu.cn}
\keywords{}
\dedicatory{}
\commby{}
\date{\today}
\begin{abstract}
  Let $A$ be a path $A_{\infty}$-algebra over a positively graded quiver $Q$. It is proved that the derived category of $A$ is triangulated equivalent to the derived category of $kQ$, which is viewed as a dg algebra with trivial differential. The main technique used in the proof is $Koszul$ $duality$.
\end{abstract}
\maketitle

\section{Introduction}

    $A_{\infty}$-algebras, invented by J. stasheff \cite{Stasheff1963}, are natural generalization of associative algebras. In this note, we discuss a specific class of $A_{\infty}$-algebras obtained from quivers, which we call path $A_{\infty}$-algebras.
    Throughout $k$ is a field. Let $Q$ be a finite positively graded quiver and $kQ$ its path algebra. By a path $A_{\infty}$-algebra over $Q$ it is meant a minimal strictly unital $A_{\infty}$-algebra over $kQ_{0}$ whose underlying associative graded algebra is $kQ$. We say that an augmented $A_{\infty}$-algebra $A$ is quasi-equivalent to a dg algebra $B$ if the enveloping algebra $U$ of $A$ is quasi-equivalent to $B$ as dg algebras (see \cite[section 7]{Keller1994}).
    Our main result is as follows.

    \begin{thm*}
        Let $A$ be a path $A_{\infty}$-algebra over a finite positively graded quiver $Q$. Then $A$ is quasi-equivalent to $kQ$ which is viewed as a dg algebra with trivial differential. In particular, the derived category of $A$ is triangulated equivalent to the derived category of $kQ$.
    \end{thm*}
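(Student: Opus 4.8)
The plan is to realize the desired quasi-equivalence through Koszul duality, by computing the Koszul dual of $A$ and recognizing it as the Koszul dual of $kQ$. Write $e = kQ_0$, a finite semisimple $k$-algebra, and regard $A$ as an augmented $A_{\infty}$-algebra over $e$ with augmentation ideal $\bar A$ spanned by the nontrivial paths. Because $Q$ is finite and positively graded, there are only finitely many paths of any given degree, so $A$, and hence its bar construction $BA = T^{c}_{e}(\bar A[1])$, is finite-dimensional in each degree. This local finiteness, together with the positivity of the grading, is exactly what makes the $A_{\infty}$-Koszul duality of Keller and Lef\`evre-Hasegawa available and \emph{involutive}: the graded dual $E(A) := (BA)^{\#}$ is an augmented dg algebra, and there is a natural quasi-isomorphism $A \simeq E(E(A))$. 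Thus it suffices to prove $E(A) \simeq E(kQ)$, for then applying $E$ once more yields $A \simeq E(E(A)) \simeq E(E(kQ)) \simeq kQ$, and the quasi-equivalence of enveloping algebras, together with the equivalence $\D(A) \simeq \D(kQ)$, follows.

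To compute $E(A)$ I would filter the bar complex $(BA, b)$ by the path-weight grading inherited from $Q$. The codifferential decomposes as $b = \sum_{j \geq 2} b_{j}$, where $b_{j}$ is induced by the higher multiplication $m_{j}$; since $m_{2}$ is the concatenation product of $kQ$ it preserves the weight, whereas each $m_{j}$ with $j \geq 3$ (of cohomological degree $2-j$) strictly lowers it. Hence $b_{2}$ is the associated-graded differential, and the first page of the resulting spectral sequence is the homology of the associative bar complex of $kQ$, namely $\mathrm{Tor}^{kQ}_{*}(e,e)$.

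The decisive point is that $kQ = T_{e}(kQ_{1})$ is a tensor algebra, hence hereditary: the minimal resolution of $e$ has length one, so $\mathrm{Tor}^{kQ}_{i}(e,e)$ vanishes for $i \geq 2$ and is concentrated in bar-lengths $0$ and $1$ (with $\mathrm{Tor}_{0} = e$ and $\mathrm{Tor}_{1} = kQ_{1}$), for every weight. Now each higher differential $d_{r}$ ($r \geq 1$) of the spectral sequence is induced by some $b_{j}$ with $j \geq 3$, and therefore lowers the bar-length by $j - 1 \geq 2$; acting on a page concentrated in bar-lengths $0$ and $1$ it must vanish. The spectral sequence collapses at the first page, giving $\mathrm{Ext}^{*}_{A}(e,e) \cong \mathrm{Ext}^{*}_{kQ}(e,e)$, concentrated in cohomological degrees $0$ and $1$. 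A degree count then pins down the whole dual: any binary or higher product on a graded algebra concentrated in degrees $0$ and $1$ must land in degree $2$, which is zero, so the induced minimal $A_{\infty}$-structure on $E(A)$ has vanishing higher products and square-zero multiplication. Therefore $E(A)$ is the trivial extension $e \ltimes (kQ_{1})^{\#}[-1]$, which is precisely $E(kQ)$; in particular both are formal dg algebras with zero differential.

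The routine verifications — the precise sign and shift conventions in $BA$, the convergence of the weight spectral sequence (guaranteed by positivity, which bounds the filtration in each total degree), and the strict unitality bookkeeping over $e$ — I would relegate to short lemmas. The genuine obstacle is not this computation, which is soft once hereditariness is in hand, but rather setting up the Koszul duality formalism so that it applies to a possibly infinite-dimensional path algebra, allowing oriented cycles in $Q$, and is \emph{involutive} in the stated generality; this is where the positive grading of $Q$ is indispensable, since it supplies the local finiteness and completeness needed for the bar--cobar adjunction to realize $A \simeq E(E(A))$.
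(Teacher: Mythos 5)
Your overall strategy --- Koszul duality, the double dual, and the hereditariness of $kQ$ feeding a weight spectral sequence --- is the same as the paper's, and your computation of $H(BA)\cong R\oplus sV$ via the collapse of that spectral sequence (higher differentials drop bar-length by at least $2$, hence die on a page concentrated in bar-lengths $0$ and $1$) is a clean substitute for the paper's explicit length-one resolution. But there are two genuine gaps, both located exactly where you defer to ``routine lemmas'' or to the literature. First, the involutivity $A\simeq E(E(A))$ does not follow from local finiteness of $BA$, because $BA$ is \emph{not} locally finite in general: if $Q$ has an oriented cycle of degree-$1$ arrows (e.g.\ a single loop $x$ with $|x|=1$), then $(s\bar A)^{\otimes n}$ contributes a nonzero degree-$0$ piece for every $n$, so $BA^{0}$ is infinite dimensional and naive double dualization fails. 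This is precisely why the paper proves the double-dual statement (Lemma \ref{l2}) through Keller's symmetric Koszul-duality lemma, verifying $p\dim R_{U}<\infty$, $i\dim R_{U}<\infty$ and, nontrivially, that $D(U)$ lies in $Loc(R)$ via a filtration of $H^{\ast}(D(U))\cong D(kQ)$ by internal degree. You correctly name this as ``the genuine obstacle,'' but the mechanism you propose for overcoming it is the one that breaks.

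Second, the ``degree count'' killing the higher products of $E(A)$ is not valid as stated. The grading in which $\mathrm{Ext}^{\ast}$ is concentrated in degrees $0$ and $1$ is the bar-length (homological) grading, which on $E(A)=D(BA)$ is only a \emph{filtration} --- the components of the differential dual to $m_{j}$, $j\geq 3$, shift bar-length by $j-1$ --- so it is not automatic that the minimal-model operations $m_{n}$ (of homological degree $2-n$, not $0$) respect it. In the honest $\mathbb{Z}$-grading of $E(A)$ the count fails outright: in Example \ref{exm1} the duals of $\alpha,\beta,\gamma$ sit in total degree $0$, the dual of $\tau$ in total degree $-1$, and $d(\tau^{\vee})=\pm\,\alpha^{\vee}\beta^{\vee}\gamma^{\vee}$ is exactly the shape that feeds a triple Massey product $\langle\gamma^{\vee},\beta^{\vee},\alpha^{\vee}\rangle$ of the right degree to equal $\tau^{\vee}$. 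That this Massey product nevertheless vanishes requires an argument (for instance a perturbation-lemma argument showing the contracting homotopy lowers bar-length by at most one, so $m_{n}(\bar E^{\otimes n})$ lands in filtration $\geq 2$, which is zero); and since, granted involutivity, ``$E(A)$ is formal'' is equivalent to the theorem itself, this is the crux rather than bookkeeping. The paper sidesteps it entirely: it never shows the first dual $A'$ is formal, but instead uses $U\simeq U^{\ast\ast}$ to get $H(U^{\ast\ast})\cong kQ$ and then observes that $D(BA')$ already has underlying graded bimodule $kQ$, so a dimension count forces its differential --- hence every higher $m_{n}$ of $A'$ --- to vanish.
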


    The motivation for this result is the following consequence, which concerns the uniqueness of triangulated structures on a given additive category. Let $\mathcal{T}$ be a compactly generated (k-linear) algebraic triangulated category and $T$ a compact generator of $\mathcal{T}$. Let $A_{\mathcal{T}}$ denote the graded algebra with underlying graded vector space $\bigoplus\limits_{n \in \mathbb{Z}}Hom_{\mathcal{T}}(T, \sum^{n}T)$ and whose multiplication is induced by the composition of morphisms in $\mathcal{T}$.

    \begin{cor*}
        Keep the above notations and assumptions. If $A_{\mathcal{T}}$ is isomorphic to the path algebra of a finite positively graded quiver, then $\mathcal{T}$ is triangulated equivalent to the derived category of $A_{\mathcal{T}}$, which is viewed as a dg algebra with trivial differential.
    \end{cor*}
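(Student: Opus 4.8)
The plan is to realize $\mathcal{T}$ as the derived category of a dg algebra, replace that dg algebra by a minimal $A_\infty$-model supported on $A_{\mathcal{T}}$, recognize this model as a path $A_\infty$-algebra, and then quote the main theorem. First I would invoke the standard dg-algebra description of algebraic triangulated categories due to Keller: since $\mathcal{T}$ is a compactly generated algebraic triangulated category with compact generator $T$, a dg enhancement of $\mathcal{T}$ yields a dg endomorphism algebra $B = \mathrm{REnd}_{\mathcal{T}}(T)$ together with a triangle equivalence $\mathcal{T} \simeq \D(B)$. By construction the cohomology of $B$ is the graded endomorphism algebra with composition product, so $H^{*}(B) \cong A_{\mathcal{T}}$ as graded algebras, the product being the Yoneda product.

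Second, I would apply Kadeishvili's theorem (equivalently, the homological perturbation lemma / homotopy transfer) to transport a minimal $A_\infty$-structure onto $H^{*}(B) = A_{\mathcal{T}}$, producing an $A_\infty$-algebra $A$ and an $A_\infty$-quasi-isomorphism $A \to B$. Thus $A$ is quasi-equivalent to $B$ in the sense of the paper, and in particular $\D(A) \simeq \D(B)$. The transfer can be performed compatibly with the unit, so that $A$ is strictly unital, and with $m_2$ equal to the induced Yoneda product, so that the underlying associative graded algebra of $A$ is exactly $A_{\mathcal{T}}$ and $m_1 = 0$.

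Third, I would feed in the hypothesis $A_{\mathcal{T}} \cong kQ$ for a finite positively graded quiver $Q$. Restricting to degree zero, $A_{\mathcal{T}}^{0} \cong (kQ)_{0} = kQ_{0}$, which is semisimple because the arrows of $Q$ have positive degree; hence $A$ is a minimal strictly unital $A_\infty$-algebra over $kQ_{0}$ whose underlying graded algebra is $kQ$, that is, precisely a path $A_\infty$-algebra over $Q$. The main theorem then supplies a quasi-equivalence between $A$ and $kQ = A_{\mathcal{T}}$ viewed as a dg algebra with trivial differential, hence a triangle equivalence $\D(A) \simeq \D(A_{\mathcal{T}})$. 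Chaining the equivalences gives $\mathcal{T} \simeq \D(B) \simeq \D(A) \simeq \D(A_{\mathcal{T}})$, as required.

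The main obstacle I anticipate lies in the bookkeeping of the third step: one must verify that the transferred minimal model genuinely meets the definition of a path $A_\infty$-algebra, namely that strict unitality over $kQ_{0}$ is preserved by the homotopy transfer and that $m_2$ really coincides with the path-algebra multiplication under the chosen isomorphism $A_{\mathcal{T}} \cong kQ$, since the main theorem applies only to $A_\infty$-structures of exactly this shape. The existence of the dg model and of the minimal $A_\infty$-model (the first two steps) are standard, so the entire content of the corollary is funneled into checking these compatibility conditions and then invoking the theorem.
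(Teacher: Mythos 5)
Your overall strategy coincides with the paper's: model $\mathcal{T}$ by a dg or $A_{\infty}$-object, transfer a minimal $A_{\infty}$-structure onto $A_{\mathcal{T}}\cong kQ$, recognize a path $A_{\infty}$-algebra, and quote the main theorem. The one place where you diverge from the paper is also the one place where your write-up is not yet a proof: you defer to ``bookkeeping'' the verification that the transferred minimal model is strictly unital \emph{over $kQ_{0}$} --- that is, that all $m_{n}$ are balanced $kQ_{0}$-bimodule maps $A^{\otimes_{kQ_{0}} n}\to A$ and that strict unitality holds with respect to $kQ_{0}$ rather than merely with respect to $k$. This is not bookkeeping; it is the substantive point of the corollary, and the paper's second example (taken from Lu--Palmieri--Wu--Zhang) exhibits a minimal $A_{\infty}$-structure on a path algebra $kQ$ that fails exactly this condition, so a generic application of Kadeishvili's theorem over $k$ to $B=\mathrm{REnd}_{\mathcal{T}}(T)$ does not by itself produce a path $A_{\infty}$-algebra in the sense of Definition \ref{def3.2}. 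The paper sidesteps the issue by working categorically: it takes the finite set $\mathbb{S}$ of compact generators (for your single generator $T$, the summands cut out by the vertex idempotents of $A_{\mathcal{T}}\cong kQ$), invokes \cite[Theorem 7.6.0.4]{Lefevre-Hasegawa2003} to obtain a \emph{minimal strictly unital $A_{\infty}$-category} $\mathcal{A}$ on this object set with $\mathcal{T}$ equivalent to $D_{\infty}(\mathcal{A})$, and then forms the Auslander algebra $\tilde{A}=\bigoplus_{X,Y\in\mathbb{S}}\Hom_{\mathcal{A}}(X,Y)$. Because the higher compositions of an $A_{\infty}$-category are by definition maps on tensor products of hom-spaces over consecutive objects, $\tilde{A}$ is automatically an $A_{\infty}$-algebra over $R=kQ_{0}$ with strictly unital structure, hence a path $A_{\infty}$-algebra once one notes it is concentrated in nonnegative degrees with degree-zero part $R$. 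To repair your version you should either pass to this $A_{\infty}$-category on the summands of $T$, or perform the homotopy transfer inside the monoidal category of graded $kQ_{0}$-bimodules (possible since $kQ_{0}\otimes_{k}kQ_{0}^{op}$ is semisimple, so $kQ_{0}$-bilinear contractions exist) and then cite Lef\`evre-Hasegawa's unital minimal model results to arrange strict unitality. With that step supplied, the rest of your argument matches the paper's.
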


\section{$A_{\infty}$-algebras}
    In this section we review some of the standard facts on $A_{\infty}$-algebras. Most of the following definitions and results come from \cite{Lefevre-Hasegawa2003}. A good introduction on this subject is Keller's paper \cite{Keller1999}. Let $R\colon= \prod_{i=1}^{n}k$ be a finite product of $k$. For simplicity  we use the unadorned $\otimes$ to denote the tensor product over $R$. Here is the definition:

    \begin{defn}\cite[Definition 1.2.1.1]{Lefevre-Hasegawa2003}\label{def2.1}
        An \emph{$A_{\infty}$-algebra over $R$} is a $\mathds{Z}$-graded $R$-bimodule $A=\bigoplus\limits_{n\in \mathds{Z}}A^{n}$ (we do not require that the left and right $R$-module structures coincide) endowed with a family of homomorphisms of graded $R$-bimodules $m_{n} \colon A^{\otimes n} \to A , n \geq 1$, of degree $2-n$ satisfying the following identities (Stasheff identities):
        $$
            \sum\limits_{r+s+t=n} (-1)^{rs+t}m_{r+1+t}(id_{A}^{\otimes r} \otimes m_{s} \otimes id_{A}^{\otimes t})=0,
        $$
        where $r,t \geq 0$ and $s\geq 1$.
    \end{defn}

    \begin{rmk}
        Our definition of $A_{\infty}$-algebras is more general than the usual definition (which requires that left and right graded module structure of $A$ over $R$ coincide, see for example \cite[Definition 2.1]{Lunts10}), but fits in the general framework of lefevre in \cite{Lefevre-Hasegawa2003}. He deals with $A_{\infty}$-algebras in a monoidal abelian category $(\mathcal{C},\odot)$, which is assumed to be $k$-linear semisimple cocomplete with exact filtered colimits such that for any object $M\in \mathcal{C}$, the functors $M \odot ?$ and $? \odot M$ are exact and commute with filtered colimits. Our $A_{\infty}$-algebras in Definition \ref{def2.1} are exact the $A_{\infty}$ algebra in the category of graded $R$-bimodules.
    \end{rmk}

    \begin{defn}\cite[Definition 1.2.1.2]{Lefevre-Hasegawa2003}
        A $morphism$ of $A_{\infty}$-algebras $f\colon A \to A'$ is a family of homomorphisms of graded $R$-bimodules $f_{n}\colon A^{\otimes n} \to A'$, $n \geq 1$ of degree $1-n$ satisfying the following identities:
        $$
            \sum_{r+s+t=n}(-1)^{rs+t}f_{r+1+t}(id^{\otimes r} \otimes m_{s} \otimes id^{\otimes t})
        = \sum_{\substack{1\leq p \leq n \\ i_{1}+\ldots+i_{p}=n}} (-1)^{\omega}m_{p}(f_{i_{1}} \otimes \cdots \otimes f_{i_{p}}),
        $$
        where $s\geq 1;r,t \geq 0$ and $\omega = \sum\limits_{1\leq x < y \leq p} i_{x}i_{y} + i_{1}(p-1) + i_{2}(p-2)+ \ldots + i_{p-1}$.
    \end{defn}

    By definition, $f_{1}$ is a morphism of complexes. We say that an $A_{\infty}$-morphism $f$ is a $quasi$-$isomorphism$ if $f_{1}$ is a quasi-isomorphism of complexes.

    An $A_{\infty}$-morphism $f$ is said to be \emph{strict} if $f_{i}=0$ for $i >1$. An $A_{\infty}$-algebra $A$ is \emph{strictly unital} if there is a strict morphism $\eta\colon R \to A$ of $A_{\infty}$-algebras (which is called the \emph{unit}) such that $m_{n}(id_{A} \otimes \cdots  \otimes id_{A} \otimes \eta \otimes id_{A} \otimes \cdots \otimes id_A) = 0$ for $n \neq 2$ and $m_{2}(\eta \otimes id_{A})= id_{A} = m_{2}( id_{A} \otimes \eta)$. Morphisms between strictly unital $A_{\infty}$-algebras are required to preserve units. A strictly unital $A_{\infty}$-algebra is \emph{augmented} if it is endowed with a strict morphism $\epsilon \colon A \to R$ of strictly unital $A_{\infty}$-algebras (which is called an \emph{augmentation}) such that $\epsilon \circ \eta = id_{R}$. The graded $R$-bimodule $\bar{A}=\Ker \epsilon$ is called the \emph{augmentation ideal} of $A$. Morphisms between augmented $A_{\infty}$-algebras are also required to preserve augmentation. It is easy to see the restrictions of  $m_{n}{\prime s}$ on $\bar{A}$ make $\bar{A}$ an $A_{\infty}$-algebra which is not unital in general.

    Given a graded $R$-bimodule $M$, we denote by $sM$ the graded $R$-bimodule which has the same underlying bimodule structure as $M$ but with a new grading given by $(sM)^{n} = M^{n+1}$. We use the notation `$s$' to denote the map $M \to sM$ of degree $-1$ which is the identity map when forgetting the grading.  Let $(A,m_{n})$ be an augmented $A_{\infty}$-algebra. Define $d_{n}\colon (s\bar{A})^{\otimes_{R} n} \rightarrow s\bar{A}$ by $d_{n}=-s \circ m_{n} \circ (s^{-1})^{\otimes_{R} n}$, then $\bigoplus_{n > 0} d_{n}$ induces a $coderivation$ $d$ of degree $1$ on the tensor coalgebra $T_{R}(s\bar{A})$. In fact, it is proved in \cite{Stasheff1963} that $d$ is a differential on $T_{R}(s\bar{A})$ and $(T_{R}(s\bar{A}),d)$ is an augmented dg coalgebra, which is called the $bar$ $construction$ for $A$ and denoted by $BA$.

    $A_{\infty}$-algebras are closely related to dg algebras. On the one hand, it is easy to see that a dg algebra over $R$ is just an $A_{\infty}$-algebra with $m_{n}$'s vanishing for $n \geq 3$.  On the other hand, as the following proposition shows, any augmented $A_{\infty}$-algebra is $A_{\infty}$-quasi-isomorphic to a dg algebra.

    \begin{prop}[{\cite[lemma 2.3.4.3]{Lefevre-Hasegawa2003}}]\label{p2.4}
        For an augmented $A_{\infty}$-algebra $A$, there is an augmented dg algebra $U$ (called the \emph{enveloping algebra} of $A$) and  a natural $A_{\infty}$-quasi-isomorphism $f\colon A \to U$ which preserves augmentation.
    \end{prop}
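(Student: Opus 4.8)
The plan is to realize $U$ via the classical bar-cobar (Koszul-dual) construction and to extract $f$ from the unit of the bar-cobar adjunction. First I would set $U \colon= \Omega(BA)$, the cobar construction applied to the bar coalgebra $BA = (T_{R}(s\bar{A}),d)$ introduced above. Since $BA$ is an augmented dg coalgebra over $R$ — and in fact conilpotent, being built on the tensor coalgebra $T_{R}(s\bar{A})$ — its cobar construction is, by the general formalism, an augmented dg algebra over $R$; this is the enveloping algebra.

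Next I would produce the morphism $f$. The key structural fact is that, for any dg algebra $B$, the $A_{\infty}$-morphisms $A \to B$ are in natural bijection with the morphisms of dg coalgebras $BA \to BB$; this is simply the definition of an $A_{\infty}$-morphism repackaged through the bar construction. The bar and cobar functors form an adjunction $\Omega \dashv B$ between augmented dg coalgebras and augmented dg algebras, whose unit at a coalgebra $C$ is a natural coalgebra morphism $\eta_{C}\colon C \to B\Omega C$. Taking $C = BA$ yields a dg coalgebra morphism $\eta_{BA}\colon BA \to B\Omega(BA) = BU$, and under the bijection above this corresponds to an $A_{\infty}$-morphism $f\colon A \to U$. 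Because all the functors involved, the adjunction, and the unit $\eta$ respect (co)augmentations, $f$ preserves augmentation, and its naturality in $A$ is inherited from that of $\eta$.

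It then remains to see that $f$ is a quasi-isomorphism. By construction $Bf = \eta_{BA}$, and a standard criterion states that an $A_{\infty}$-morphism is a quasi-isomorphism precisely when the induced map on bar constructions is a quasi-isomorphism of the underlying complexes; hence it suffices to prove that the unit $\eta_{BA}$ is a quasi-isomorphism. This is the fundamental acyclicity property of the bar-cobar adjunction: for every conilpotent coaugmented dg coalgebra $C$, the unit $\eta_{C}\colon C \to B\Omega C$ is a quasi-isomorphism. The usual argument filters $B\Omega C$ by tensor length (the weight coming from the cobar direction), identifies the associated graded with an explicit Koszul-type complex, and establishes its acyclicity by a contracting homotopy; here the semisimplicity of $R = \prod_{i=1}^{n}k$ is exactly what guarantees that this $R$-linear homotopy runs without obstruction, every $R$-module being both projective and injective. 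I expect this acyclicity — the proof that the bar-cobar unit is a quasi-isomorphism — to be the main obstacle, since everything else is a formal consequence of the adjunction and the bar dictionary; for the purposes of the present note one may instead invoke it directly from \cite{Lefevre-Hasegawa2003}.
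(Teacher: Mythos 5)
Your construction is exactly the one the paper intends: Proposition~\ref{p2.4} is quoted from Lef\`{e}vre-Hasegawa without proof, and the remark immediately following it identifies $U$ with $\Omega BA$, which is precisely your definition; the bar--cobar adjunction argument you sketch is the standard proof from that reference. One small caution: the criterion relating $f$ and $Bf$ is properly stated in terms of \emph{filtered} quasi-isomorphisms with respect to the primitive filtration rather than plain quasi-isomorphisms of underlying complexes (a quasi-isomorphism of conilpotent dg coalgebras need not detect $f_{1}$ in general), but this is harmless here because the unit $\eta_{BA}$ is in fact a filtered quasi-isomorphism, which yields both that $Bf$ is a quasi-isomorphism and that $f_{1}$ is.
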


    \begin{rmk}
        There is a construction dual to bar construction, named by $cobar$ construction, which is a functor from the category of augmented dg coalgebras to the category of augmented dg algebras. Let $C$ be an augmented dg coalgebra, the cobar construction of $C$ is denoted by $\Omega C$. The enveloping algebra $U$ defined in Proposition \ref{p2.4} of an augmented $A_{\infty}$-algebra $A$ is actually defined by $\Omega B A$.
    \end{rmk}

    An $A_{\infty}$-algebra $(A,m_{i})$ is said to be $minimal$ if $m_{1}=0$. If $A$ is minimal, then $(A,m_{2})$ is an associative graded algebra. Let $A$ be an $A_{\infty}$-algebra, a minimal model of $A$ is an minimal $A_{\infty}$-algebra $A'$ together with an $A_{\infty}$-quasi-isomorphism $f: A' \to A$. The following lemma shows that an augmented $A_{\infty}$-algebra admits a minimal model which is still augmented.

    \begin{lem}\label{minimal-model}
        Let $A$ be an augmented $A_{\infty}$-algebra over $R$. Then there is a minimal augmented $A_{\infty}$-algebra $A'$ and an $A_{\infty}$-quasi-isomorphism
        $f\colon A' \to A$ which is strictly unital and preserves augmentation.
    \end{lem}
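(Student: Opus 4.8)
The plan is to construct the minimal model via the standard Homotopy Transfer Theorem (HTT), adapted to preserve the augmented and strictly unital structure. Since $A$ is augmented, the Stasheff identities restrict to make the augmentation ideal $\bar{A}$ an $A_{\infty}$-algebra (generally non-unital), and by Proposition \ref{p2.4} we may as well assume we are working relative to the dg-level. The first step is to pass to homology: since $\bar{A}$ is a complex with differential $m_1$, I would choose a decomposition of $\bar{A}$ (as a graded $R$-bimodule) into $H^\ast(\bar{A}, m_1)$ together with a contracting homotopy. Concretely, I would split off the cycles and boundaries to produce graded $R$-bimodule maps $p\colon \bar{A} \to H$, $i\colon H \to \bar{A}$, and a homotopy $h\colon \bar{A} \to \bar{A}$ of degree $-1$ satisfying $\mathrm{id} - ip = m_1 h + h m_1$, with $pi = \mathrm{id}_H$.

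**Next I would** transport the $A_{\infty}$-structure along this homotopy data. Set $A' := H \oplus R$ as an augmented graded $R$-bimodule, with augmentation ideal $H$ and $m_1' = 0$ by construction. The higher multiplications $m_n'$ on $H$ are defined by the usual sum over planar binary trees: each internal vertex with $j$ inputs is labelled by $m_j$, the internal edges by the homotopy $h$, the leaves by $i$, and the root by $p$. Dually, and perhaps more cleanly given the bar-construction machinery already set up in the excerpt, I would phrase the transfer coalgebraically: the homotopy $(i,p,h)$ induces a contraction of the bar complex $B\bar A = (T_R(s\bar A), d)$ onto a smaller cofree coalgebra $T_R(sH)$, and the transported codifferential $d'$ encodes exactly the $m_n'$. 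One then defines $f\colon A' \to A$ via the tree-sum formula as well (trees with leaves $i$, internal edges $h$, root $i$), so that $f_1 = i$ is a quasi-isomorphism by the homology splitting.

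**The strict unitality** then needs to be arranged separately, and this is **the step I expect to be the main obstacle.** The raw HTT output is only homotopy-unital; to upgrade it to a \emph{strictly} unital model I would choose the homotopy data compatibly with the unit, in particular arranging that $h$ vanishes on the image of the unit $\eta$ and that the idempotent $ip$ fixes a chosen unit element. Concretely, I would ensure $H$ contains a copy of $R$ mapping to the class of the unit, and check that the tree formulas then inherit the strict-unit relations $m_n'(\ldots \otimes \eta \otimes \ldots) = 0$ for $n \neq 2$ and $m_2'(\eta \otimes \mathrm{id}) = \mathrm{id} = m_2'(\mathrm{id} \otimes \eta)$. Equivalently one may invoke the existence of \emph{strictly unital} minimal models in Lefèvre-Hasegawa \cite{Lefevre-Hasegawa2003}, where the transfer is carried out in the strictly unital framework from the outset, but I would want to verify the contracting homotopy can be taken to respect the augmentation so that $f$ preserves both the unit $\eta$ and the augmentation $\epsilon$.

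**Finally**, preservation of augmentation is essentially automatic once the construction is done on the augmentation ideal: since $\bar{A}$ is $\epsilon$-stable and $H = H^\ast(\bar A)$, the summand $R$ splits off canonically, and the strict augmentation $\epsilon'\colon A' \to R$ together with the fact that $f$ is built from maps into $\bar A$ guarantees $\epsilon \circ f = \epsilon' $ componentwise. I would conclude by noting $f_1 = i$ is a quasi-isomorphism, so $f$ is an $A_\infty$-quasi-isomorphism, and $A'$ is minimal augmented and strictly unital as required.
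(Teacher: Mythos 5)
Your construction is essentially the paper's proof: decompose $A=R\oplus\bar A$ as an augmented $A_\infty$-algebra, take a minimal model of the (non-unital) $A_\infty$-algebra $\bar A$, and formally adjoin the summand $R$ back to get $A'=R\oplus H^{\ast}(\bar A)$; the paper simply cites Lef\`evre-Hasegawa's Proposition 3.2.4.1 (Kadeishvili) for the minimal model of $\bar A$ where you spell out the homotopy-transfer tree formulas. The one thing worth correcting: the step you single out as ``the main obstacle'' --- upgrading to strict unitality --- is exactly the problem your own setup has already dissolved. Because the transfer is performed entirely on the augmentation ideal $\bar A$, the unit never enters the homotopy data, and declaring all higher $m'_n$ (and higher $f_n$) to vanish whenever an argument comes from the adjoined $R$ summand makes $A'$ strictly unital and $f$ unit- and augmentation-preserving by fiat; this is the standard equivalence between non-unital and augmented $A_\infty$-algebras. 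In particular your proposed fix of ``ensuring $H$ contains a copy of $R$ mapping to the class of the unit'' is inconsistent with your own definition $H=H^{\ast}(\bar A)$ --- the unit lives in the separate summand $R$, not in $H$ --- and should be dropped; no compatibility condition on $h$, $i$, $p$ with respect to $\eta$ is needed. With that paragraph removed, the argument is correct and matches the paper.
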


    \begin{proof}
        Since $A$ is an augmented $A_{\infty}$-algebra, $A$ is a direct sum of two $A_{\infty}$-algebras $R$ and $\bar{A}$. By {\cite[Proposition 3.2.4.1]{Lefevre-Hasegawa2003}} (see also \cite{Kad1982}), there is a minimal $A_{\infty}$-algebra $\bar{A}'$ and an $A_{\infty}$-quasi-isomorphism $f\colon \bar{A}' \to \bar{A}$ ($\bar{A}'$ is a minimal model of $\bar{A}$). Then the augmented $A_{\infty}$-algebra $A'=R\oplus \bar{A}'$  satisfies the desired properties.
    \end{proof}

    \begin{defn}[{\cite[Definition 2.3.1.1]{Lefevre-Hasegawa2003}}]
        Let $A$ be an $A_{\infty}$-algebra over $R$. A (right) $A_{\infty}$-module $M$ over $A$ is a graded right $R$-module $M=\bigoplus\limits_{n\in \mathds{Z}}M^{n}$ endowed with a family of homomorphisms of graded right $R$-modules $m^{M}_{n} \colon M\otimes A^{\otimes n-1} \to M , n \geq 1$, of degree $2-n$ satisfying the following identities (Stasheff identities):
        $$
            \sum\limits_{r+s+t=n} (-1)^{rs+t}m_{r+1+t}(id_{A}^{\otimes r} \otimes m_{s} \otimes id_{A}^{\otimes t})=0,
        $$
        where $r,t \geq 0$ and $s\geq 1$.
    \end{defn}

    Let $A$ be a strictly unital $A_{\infty}$-algebra, an $A_{\infty}$-module $M$ over $A$ is strictly unital if $$m_{i}^{M}(id_{M} \otimes id_{A},\ldots,id_{A} \otimes \eta \otimes id_{A},\ldots,id_{A})=0$$
    for $i \geq 3$ and $m_{2}^{M}(id_{M} \otimes \eta)=id_{M}$. For more details on $A_{\infty}$-modules we refer to \cite[Section 2.3]{Lefevre-Hasegawa2003} and \cite[Section 4]{Keller1999}. Let $A$ be an augmented $A_{\infty}$-algebra over $R$ and $Mod_{\infty}(A)$ be the the category of strictly unital $A_\infty$-modules over $A$ with strictly unital
    $A_{\infty}$-morphisms as morphisms. The \emph{derived category} $\mathcal{D}(A)$ is the category obtained from $Mod_{\infty}(A)$ by formally inverting all $A_{\infty}$-quasi-isomorphisms. The category $\mathcal{D}(A)$ is a triangulated category whose suspension functor is the shift functor $[1]$.

    \begin{rmk}\label{rmk2.8}
        If $A$ is a dg algebra which is unital over $R$, a (right) unital dg module $M$ over $A$ can be viewed as an strictly unital $A_{\infty}$-module over $A$ with $m_{n}^{M}=0$ for $n \geq 3$.  By \cite[Lemme 4.1.3.8]{Lefevre-Hasegawa2003}, the derived category of dg $A$-modules is canonically equivalent to the derived category of strictly unital $A_{\infty}$-modules over A. We will identify these two derived categories.
        Let $M,N$ be two right dg modules over $A$. The complex (over $k$) $\mathcal{H}om_{A}(M,N)$ of homomorphisms from $M$ to $N$ over $A$ is defined as follows. The component  $\mathcal{H}om^{i}_{A}(M,N)$ consists of all homogeneous maps of right $R$-modules $f\colon M \to N$ of degree $i$ such that $f(ma)=f(m)a$ for all $A\in A$ and $m\in M$.  The differential $d$ in $ \mathcal{H}om_{A}(M,N)$ is defined by $d(f)=d_{N}\circ f -(-1)^{|f|} f \circ d_{M}$. We use $\mathcal{E}nd_{A}(M)$ denote $\mathcal{H}om_{A}(M,M)$.
    \end{rmk}

    The following propositions will be used in the proof of Lemma \ref{l2}.

    \begin{prop}[{\cite[Theorem 4.1.2.4]{Lefevre-Hasegawa2003}}]\label{p2.10}
        Let $f\colon A' \to A$ be an $A_{\infty}$-mophism of augmented $A_{\infty}$-algebras. Then $f$ induces a triangulated functor $R_{f}\colon D_{\infty}(A) \to D_{\infty}(A')$. If $f$ is an $A_{\infty}$-quasi-isomorphism, then $R_{f}$ is a triangulated equivalence.
    \end{prop}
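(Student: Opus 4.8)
The plan is to first construct the functor $R_f$ as restriction of scalars along $f$, verify it is triangulated, and then promote the hypothesis ``$f_1$ is a quasi-isomorphism'' into an equivalence of derived categories.

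\emph{Construction of $R_{f}$.} Given a strictly unital $A_{\infty}$-module $(M,m^{M}_{\bullet})$ over $A$, I would endow the same underlying graded $R$-module with an $A_{\infty}$-structure over $A'$ by
$$
(m^{f^{*}M})_{n} \;=\; \sum_{\substack{j\geq 0,\ i_{1},\ldots,i_{j}\geq 1 \\ i_{1}+\cdots+i_{j}=n-1}} \pm\, m^{M}_{j+1}\bigl(\mathrm{id}_{M}\otimes f_{i_{1}}\otimes\cdots\otimes f_{i_{j}}\bigr),
$$
with signs dictated by the Koszul rule exactly as in the morphism identities. That $m^{f^{*}M}$ satisfies the $A_{\infty}$-module Stasheff identities is forced by the module identities for $m^{M}$ together with the morphism identities for $f$; this is a sign-bookkeeping verification I would not grind through. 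Since $f$ preserves strict units, $f^{*}M$ is again strictly unital, and the same recipe applied to $A_{\infty}$-module morphisms yields a functor $f^{*}\colon \mathrm{Mod}_{\infty}(A)\to \mathrm{Mod}_{\infty}(A')$. The decisive elementary observation is that the leading component is untouched, $(m^{f^{*}M})_{1}=m^{M}_{1}$, because $f_{1}$ is degree-preserving and unital; hence $f^{*}$ changes neither the underlying complex of a module nor the first component of a morphism, so it sends $A_{\infty}$-quasi-isomorphisms to $A_{\infty}$-quasi-isomorphisms. By the universal property of the localization defining $D_{\infty}$, the functor $f^{*}$ descends to $R_{f}\colon D_{\infty}(A)\to D_{\infty}(A')$. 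Triangulatedness is then formal: $R_{f}$ commutes with the shift $[1]$, and, being the identity on underlying complexes, it carries the mapping-cone triangles defining the triangulated structure of $D_{\infty}(A)$ to those of $D_{\infty}(A')$.

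\emph{Equivalence for a quasi-isomorphism.} Here I would invoke the fundamental fact that an $A_{\infty}$-quasi-isomorphism is invertible up to homotopy: there is an $A_{\infty}$-morphism $g\colon A\to A'$ with $g\circ f$ and $f\circ g$ homotopic to the identities. Defining $g^{*}$ by the same formula, one checks $(g\circ f)^{*}\simeq f^{*}g^{*}$ and $(f\circ g)^{*}\simeq g^{*}f^{*}$, and that homotopic $A_{\infty}$-morphisms induce naturally isomorphic functors after passing to $D_{\infty}$; hence $R_{f}R_{g}$ and $R_{g}R_{f}$ are isomorphic to the identities and $R_{f}$ is a triangulated equivalence with quasi-inverse $R_{g}$. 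Alternatively, avoiding homotopy-invertibility, one can argue by dévissage: the free module satisfies $\Hom_{D_{\infty}(A)}(A,M[n])\cong H^{n}(M)$, and $f^{*}$ preserves the cohomology of every module; the key computation is that $f_{1}$ extends to an $A_{\infty}$-quasi-isomorphism $A'_{A'}\to f^{*}(A_{A})$, so that $R_{f}$ identifies the generators and induces isomorphisms on all Hom-groups out of $A$. Since the free module generates $D_{\infty}$ and $R_{f}$ commutes with coproducts, full faithfulness and essential surjectivity then propagate to the whole category. (If one is willing to use Remark \ref{rmk2.8} and Proposition \ref{p2.4}, the cleanest route is to pass to enveloping algebras: $f$ induces a quasi-isomorphism $U'\to U$ of augmented dg algebras, and the classical theorem that a quasi-isomorphism of dg algebras induces a triangulated equivalence of derived categories transports back to $R_{f}$.)

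I expect the main obstacle to be precisely the passage from the quasi-isomorphism hypothesis to the equivalence. Constructing $R_{f}$ and checking its exactness is formal once the signs are fixed, but the genuine content lies in the homotopy-theoretic input --- either the existence of an honest $A_{\infty}$ quasi-inverse $g$ together with the homotopy-invariance of $R_{(-)}$, or the module-level quasi-isomorphism $A'_{A'}\simeq f^{*}(A_{A})$ combined with a generation argument in the compactly generated category $D_{\infty}(A)$. This is where I would concentrate the real work.
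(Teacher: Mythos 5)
The paper offers no proof of this proposition: it is quoted verbatim from Lef\`evre-Hasegawa's thesis (Th\'eor\`eme 4.1.2.4) and used as a black box, so there is no in-paper argument to measure your proposal against. Judged on its own, your outline is correct and is essentially the standard proof. The restriction functor you write down is the right one, and the observation that $(m^{f^{*}M})_{1}=m^{M}_{1}$ --- so that $f^{*}$ is the identity on underlying complexes, preserves quasi-isomorphisms, commutes with coproducts and shifts, and hence descends to a triangulated functor on the localizations --- is exactly the point that makes the first half formal. For the second half, both of your routes are viable in this setting: the base $R=\prod k$ is semisimple, so the theorem that an $A_{\infty}$-quasi-isomorphism admits a homotopy inverse applies, and you correctly identify that you then also need homotopy-invariance of $R_{(-)}$ on derived categories; the d\'evissage route via the compact generator and the module-level quasi-isomorphism $A'_{A'}\to f^{*}(A_{A})$ is likewise sound. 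Your parenthetical third route (pass to enveloping algebras $U'\to U$ and quote Keller's theorem for dg algebras) is in fact the one closest in spirit to how Lef\`evre-Hasegawa organizes the material, which runs everything through the bar construction and (co)module categories over $BA$. The only things left unverified are the sign bookkeeping in the Stasheff identities for $f^{*}M$ and the two homotopy-theoretic inputs you explicitly flag; since those are established results in the cited reference, your proposal is an acceptable reconstruction rather than a proof with a gap.
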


    \begin{prop}[{\cite[Proposition 3.3.1.7]{Lefevre-Hasegawa2003})}]\label{p2.11}
        Let $A$ be a strictly unital $A_\infty$-algebra, and $M$ a strictly unital $A_\infty$-module over $A$. Then there is a strictly unital minimal $A_\infty$-module $M'$ over $A$ together with a strictly unital quasi-isomorphism of  $A_\infty$-modules from $M'$ to $M$.
    \end{prop}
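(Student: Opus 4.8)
The plan is to prove this as the module-level analogue of Kadeishvili's minimal model theorem, by homotopy transfer along a deformation retract of the underlying complex. First I would forget all higher structure and regard $(M, m_1^M)$ merely as a complex of graded $R$-modules. Because $R = \prod_{i=1}^{n} k$ is semisimple, every short exact sequence of graded $R$-modules splits, and so $(M, m_1^M)$ admits a special deformation retract onto its homology $H := H^{\bullet}(M, m_1^M)$. That is, I can choose $R$-linear maps $i \colon H \to M$ and $p \colon M \to H$ of degree $0$ and a homotopy $h \colon M \to M$ of degree $-1$ satisfying
\[ p\,i = \mathrm{id}_H, \qquad \mathrm{id}_M - i\,p = m_1^M h + h\, m_1^M, \]
and I may further impose the side conditions $h\,i = 0$, $p\,h = 0$, $h^2 = 0$, where $H$ carries the zero differential.

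Next I would set $M' := H$ with zero differential, so that $m_1^{M'} = 0$ and $M'$ is minimal by construction; it then remains to transport the $A_{\infty}$-module structure from $M$ onto $M'$, keeping the $A_{\infty}$-algebra $A$ fixed. I would define the higher maps $m_n^{M'} \colon M' \otimes A^{\otimes n-1} \to M'$ for $n \geq 2$ and the components $f_n \colon M' \otimes A^{\otimes n-1} \to M$ of a morphism $f \colon M' \to M$ with $f_1 = i$ by the standard transfer formulas. Since each module operation $m_k^M$ has exactly one module input and one module output, the relevant trees are left combs along the module spine: one feeds $i$ into the first operation, inserts $h$ on each internal module edge between consecutive applications of the $m_k^M$, distributes the algebra inputs $a_1, \ldots, a_{n-1}$ among the successive operations, and closes with $p$ at the root (for $m_n^{M'}$) or with $h$ at the root (for $f_n$). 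This yields an explicit recursion in $n$.

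I would then verify the module Stasheff identities for the $m_n^{M'}$ and the morphism relations for $f$. The efficient route is to pass to the comodule picture, where a strictly unital $A_{\infty}$-module structure on $M$ is encoded as a codifferential $D$ on the cofree comodule $M \otimes T_R(s\bar{A})$ over the bar coalgebra $BA$. Taking $m_1^M \otimes \mathrm{id}$ as the base differential—for which $(i,p,h) \otimes \mathrm{id}$ is a deformation retract onto $H \otimes T_R(s\bar{A})$—and the remaining terms of $D$ as a perturbation, I would apply the homotopy perturbation lemma. Convergence of the perturbation series is ensured by the filtration by the number of tensor factors from $\bar{A}$ together with the side condition $h^2 = 0$: two consecutive length-preserving steps would apply $h$ twice, so each such step must be separated by a genuinely length-lowering higher operation, of which only finitely many fit. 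The lemma then outputs the transferred codifferential on $H \otimes T_R(s\bar{A})$, whose components are exactly the comb formulas above, together with the comorphism realizing $f$; the identities $D'^2 = 0$ and the intertwining relation are precisely the module Stasheff identities and the morphism relations. That $f$ is a quasi-isomorphism is immediate, as $f_1 = i$ induces an isomorphism on homology by construction.

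The hard part will be maintaining strict unitality on the nose, not merely up to homotopy. Since $A$ and $M$ are strictly unital with unit $\eta \colon R \to A$, I would choose the retract data compatible with the $R$-action and with the unit—arranging, for instance, that $h$ annihilates the image of $m_2^M(\mathrm{id}_M \otimes \eta)$—so that in the comb formulas every summand in which some algebra input equals $\eta(1)$ with arity $n \neq 2$ vanishes, leaving $m_2^{M'}(\mathrm{id}_{M'} \otimes \eta) = \mathrm{id}_{M'}$ and making $f$ unit-preserving. Verifying that such a unit-compatible retract can always be selected, and that the transfer then respects strict unitality exactly rather than up to homotopy, is the delicate bookkeeping for which the unital comodule formulation is designed, and is where the remainder of the argument is carried out.
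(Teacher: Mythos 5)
The paper offers no proof of this proposition: it is imported verbatim from L\'ef\`evre-Hasegawa (Proposition 3.3.1.7), whose argument is precisely the homotopy-transfer/perturbation argument you outline, carried out on the cofree comodule $M\otimes T_R(s\bar A)$ over the bar coalgebra. So your overall architecture --- use semisimplicity of $R$ to choose a deformation retract of $(M,m_1^M)$ onto its homology with side conditions, transfer the higher operations by comb formulas along the module spine, and verify the Stasheff identities via the perturbation lemma with convergence controlled by the word-length filtration --- is correct and matches the source. (A minor quibble on convergence: what makes the series terminate is that the perturbation strictly lowers the word-length filtration and elements of $M\otimes T_R(s\bar A)$ have finite length; the side condition $h^2=0$ is convenient but is not the reason.)

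The one genuine defect is your treatment of strict unitality. The side condition you propose --- that $h$ annihilate the image of $m_2^M(\mathrm{id}_M\otimes\eta)$ --- is impossible: strict unitality says $m_2^M(\mathrm{id}_M\otimes\eta)=\mathrm{id}_M$, so its image is all of $M$ and your condition forces $h=0$, i.e.\ that $M$ was already minimal. Fortunately no such condition is needed. You have already passed to the reduced tensor coalgebra $T_R(s\bar A)$, with $\bar A$ the augmentation ideal (equivalently $A/\eta(R)$); strictly unital $A_\infty$-module structures on $M$, and strictly unital morphisms between them, correspond bijectively to comodule differentials and comodule morphisms on $M\otimes T_R(s\bar A)$. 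Hence whatever the perturbation lemma produces in that setting is automatically strictly unital once re-expanded via the unit constraints, and the ``delicate bookkeeping'' you defer to the end simply does not arise. Deleting the impossible condition on $h$ and invoking this bijection closes the argument.
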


\section{Path $A_{\infty}$-algebra on a positively graded quiver}

    In this section, we will introduce the definition of path $A_{\infty}$-algebras over a finite positively graded quiver and prove the main theorem.

    \begin{defn}
        A $positively$ $graded$ $quiver$ $Q$ is a finite graded quiver whose arrows are in positive degrees and vertices are in degree $0$. We use $Q_{0}$ denote the set of vertices and $Q_{1}$ the set of arrows.
    \end{defn}

    Let $Q$ be a positively graded quiver and $kQ$ the path algebra. The subalgebra $kQ_0$ is clearly a separable commutative algebra over $k$. Denote by $u\colon kQ_0\to kQ$ the canonical embedding and $\varepsilon\colon kQ\to kQ_0$ the canonical projection.

    \begin{defn}\label{def3.2}
        A $path$ $A_{\infty}$-$algebra$ over a positively graded quiver $Q$ is defined to be a minimal, strictly unital $A_{\infty}$-algebra over $kQ_0$ with underlying associative graded algebra $kQ$ and the unit map $(u_1=u, 0,\ldots)$.
    \end{defn}

    \begin{rmk}
        Let $A=(kQ, m_2,\ldots)$ be a path $A_{\infty}$-algebra over a positively graded quiver $Q$. Since $A$ concentrated in nonnegative degrees and the $0$th degree is $kQ_{0}$, the canonical projection $(\varepsilon_{1}=\varepsilon, 0,\ldots)\colon A\to kQ_0$ makes $A$ an augmented $A_{\infty}$-algebra over $kQ_0$.
    \end{rmk}

    Here are two examples, the first one is a path $A_{\infty}$-algebra and the second is not.

    \begin{exm}\label{exm1}
        The graded quiver $Q$ is
        $$
        \xymatrix{
            4 \ar@/_2pc/[rrr]^{\tau} \ar[r]^{\gamma} & 3 \ar[r]^{\beta} & 2 \ar[r]^{\alpha} & 1
        }
        $$
        with $|\alpha|=|\beta| =|\gamma|=1$ , $|\tau|=2$ and $|e_{1}|,|e_{2}|,|e_{3}|,|e_{4}| = 0$.
        The $A_{\infty}$-structure on $kQ$ is defined as follows. $m_{n}=0$ for $n \neq 2,3$ and $m_{2}$ is the usual multiplication of $kQ$. For $m_{3}$, we define it on the basis elements by $m_{3}(x,y,z)=\tau$ if $(x,y,z)=(\alpha,\beta,\gamma)$ and $m_{3}(x,y,z)=0$ otherwise. This $A_{\infty}$ structure is well defined.
    \end{exm}
    The next example is constructed by \cite[Section 5]{Lu2004}.
    \begin{exm}
        The graded quiver $Q$ is
        $$
        \xymatrix{
            2 \ar[r]^{\alpha} & 1
        }
        $$
        with $|\alpha|=1$. Let $e= e_{1} - e_{2}$ then $e \cdot \alpha = \alpha = - \alpha \cdot e$ and $e^{2} = 1$. The $A_{\infty}$-structure on $kQ$ is defined as follows. $m_{n}=0$ for $n \neq 2,3$ and $m_{2}$ is the usual multiplication of $kQ$. For $m_{3}$, we defined it on basis elements by, $m_{3}(x,y,z)=\alpha$ if $(x,y,z)=(\alpha, e, \alpha)$ and $m_{3}(x,y,z)=0$ otherwise. Since $m_{3}(\alpha,e,\alpha)\neq 0$ which means $(kQ,m_{2},m_{3})$ is not strictly unital over $kQ_{0}$, so it is not a path $A_{\infty}$-algebras under definition \ref{def3.2}.
    \end{exm}

    The technique to prove the main theorem is Koszul duality for dg algebras, so here I give a brief introduction (see also  \cite[Section 10]{Keller1994}). Let $R$ be the direct product of $n$ copies of $k$ and $e_{1},\ldots,e_{n}$ be the standard basis of $R$ over $k$. Assume $A$ is an augmented dg algebra over $R$. Then $R$ is a dg bimodule via the augmentation map over $A$. We define a dg category $\mathcal{A}$ whose set of objects is ${e_{1},\ldots,e_{n}}$ and for any two objects $e_{i},e_{j}$ the homomorphism space $\mathcal{A}(e_{i},e_{j})$ is defined by $e_{j}Ae_{i}$. A right dg $A$-module can be viewed as a right dg module over $\mathcal{A}$ and vice versa. This implies a canonical triangulated equivalence between the two derived categories $\mathcal{D}(A)$ and $\mathcal{D}(\mathcal{A})$. Let $\{S_{1},\ldots,S_{n}\}$ be the collection of simple dg submodules of $R$ over $A$ such that $S_{i}e_{j}=0$ for $i\neq j$ and $S_{i}e_{i}\cong k$. Then $\mathcal{A}$ is an augmented dg category in the sence \cite[section 10.2]{Keller1994}. A $\emph{Koszul dual}$ $A^{\ast}$ of $A$ is defined as the endomorphism algebra $\mathcal{E}nd_{A}(P_{A})$ of a homotopically projective resolution $P_{A}$ of $R=S_{1}\oplus \ldots \oplus s_{n}$. Keller proved that the Koszul dual of $A$ does not rely on the choice of homotopically projective resolution of $R$ up to \emph{quasi-equivalence}. Here a quasi-equivalence $A\to B$ of dg algebras is a dg $A$-$B$-bimocule $X$ together with an element $x \in Z^{0}X$ such that the maps $ A \rightarrow X, a \mapsto ax$ and $B \rightarrow X, b\mapsto xb$
    are quasi-isomorphisms. A special case of quasi-equivalence if there is a quasi-isomorphism of dg algebra $A\to B$. If two dg algebras are quasi-equivalence, their derived categories are triangulated equivalence (\cite[Section 7.3]{Keller1994}).

    Keep the above assumptions and let $M$ be a right dg module over $A$. Let $p\emph{dim} M$ (resp. $i\emph{dim} M$) denote the $projective$ $dimension$ (resp. the $injective$ $dimension$) of $M$ (see \cite[Section 10.4]{Keller1994}). For a graded $k$-vector space $M$, let $D(M)$ denote the graded $k$-dual of $M$ which is defined by $D(M)^{n}= Hom_{k}(M_{R}^{-n},k)$. A graded $k$-vector space $M$ is called $locally$ $finite$ if each homogeneous component of $M$ has finite dimension. The following lemma is just an `algebra' version of \cite[Section 10.5 Lemma (the `symmetric' case)]{Keller1994}.

    \begin{lem}[{\cite[Section 10.5 Lemma (the `symmetric' case)]{Keller1994}}]\label{l2.5}
        Let $A$ be an augmented dg algebra over $R$.  Suppose $p\emph{dim} R_{A} < \infty$, $i\emph{dim} R_{A} < \infty$ and $D(A)$ belongs to the smallest triangulated subcategory of $\mathcal{D}(A)$ closed under direct sums and containing $R$. Then $\mathcal{E}nd_{A}(D(A))$ is quasi-equivalent to $A^{\ast\ast}$. We further assume $H^{\ast}(A)$ is locally finite,
        then $A$ is quasi-equivalent to $\mathcal{E}nd_{A}(D(A))$. So $A$ is quasi-equivalent to $A^{\ast\ast}$.
    \end{lem}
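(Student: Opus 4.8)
The plan is to obtain both quasi-equivalences as instances of Koszul duality, reading the statement as the dg-algebra translation of Keller's symmetric lemma; one could simply transport that lemma along the equivalence $\mathcal{D}(A)\simeq\mathcal{D}(\mathcal{A})$ to the dg category $\mathcal{A}$ attached to $A$, but I would argue directly. First I would fix a homotopically projective resolution $P_A\to R$, so that by definition $A^{\ast}=\mathcal{E}nd_A(P_A)$ represents $\mathrm{RHom}_A(R,R)$ and $P_A$ becomes an $A^{\ast}$-$A$-bimodule. This yields the Koszul duality functor $F=\mathrm{RHom}_A(P_A,-)\colon\mathcal{D}(A)\to\mathcal{D}(A^{\ast})$ with $F(R)=A^{\ast}$ the free module. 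Since $p\dim R_A<\infty$ forces $R$ to be compact, $F$ restricts to an equivalence from the smallest triangulated subcategory $\mathrm{Loc}(R)\subseteq\mathcal{D}(A)$ closed under direct sums and containing $R$ onto $\mathcal{D}(A^{\ast})$.

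The crux of the first assertion is to recognize the simple $A^{\ast}$-module on the $A$-side. I would compute $F(D(A))=\mathcal{H}om_A(P_A,D(A))\cong D(P_A)\simeq D(R)=R$, using the adjunction $\mathcal{H}om_A(M,D(A))\cong D(M\otimes_A A)=D(M)$ together with $P_A\simeq R$, and then check that the residual $A^{\ast}$-action factors through the augmentation $A^{\ast}\to R$, so that $F(D(A))$ is the simple module $R_{A^{\ast}}$. Granting this, the hypothesis that $D(A)$ lies in $\mathrm{Loc}(R)$ makes $F$ fully faithful on $D(A)$, whence
\[
A^{\ast\ast}=\mathrm{REnd}_{A^{\ast}}(R_{A^{\ast}})\simeq\mathrm{REnd}_{A^{\ast}}(F(D(A)))\simeq\mathrm{REnd}_A(D(A))=\mathcal{E}nd_A(D(A)),
\]
the last equality because $D(A)$ is homotopically injective. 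As $F$ is induced by the tilting bimodule $P_A$, this chain upgrades to a quasi-equivalence of dg algebras. The finiteness of $i\dim R_A$ enters here to guarantee that $D(A)$ is compact in $\mathrm{Loc}(R)$, so that $R_{A^{\ast}}=F(D(A))$ is perfect over $A^{\ast}$ and $A^{\ast\ast}$ is defined on the same symmetric footing as $A^{\ast}$.

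For the remaining quasi-equivalence I would examine the dg-algebra homomorphism $A\to\mathcal{E}nd_A(D(A))=\mathcal{H}om_A(D(A),D(A))$ given by left multiplication on $D(A)$. The adjunction $\mathcal{H}om_A(D(A),D(A))\cong D(D(A)\otimes_A A)=D(D(A))$ identifies this homomorphism with the canonical biduality morphism $A\to D(D(A))$. Taking cohomology and using exactness of $D$, it becomes $H^{\ast}(A)\to D(D(H^{\ast}(A)))$, an isomorphism precisely because $H^{\ast}(A)$ is locally finite. Hence $A\to\mathcal{E}nd_A(D(A))$ is a quasi-isomorphism, and composing with the first part gives that $A$ is quasi-equivalent to $A^{\ast\ast}$.

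The step I expect to be the main obstacle is the second paragraph: pinning down the $A^{\ast}$-module structure on $F(D(A))$ so as to recognize it as the simple module $R_{A^{\ast}}$, and confirming that $F$ is fully faithful on $D(A)$ rather than merely on $R$. This is exactly where the hypotheses combine, the finiteness of $p\dim R_A$ and $i\dim R_A$ providing compactness of $R$ and of $D(A)$, and the assumption $D(A)\in\mathrm{Loc}(R)$ providing the full faithfulness that transports $\mathrm{REnd}$ across the duality.
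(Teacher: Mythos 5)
The paper does not actually prove this lemma: it is imported verbatim (in ``algebra'' form) from Keller's \emph{Deriving DG categories}, Section 10.5, so there is no in-paper argument to compare against. Your reconstruction is essentially the standard Koszul-duality proof and I believe it is correct in outline: $R$ compact (from $p\dim R_{A}<\infty$) makes $F=\mathrm{RHom}_{A}(P_{A},-)$ an equivalence from $Loc(R)$ onto $\mathcal{D}(A^{\ast})$; the adjunction $\mathcal{H}om_{A}(M,D(A))\cong D(M)$ gives $F(D(A))\simeq D(P_{A})\simeq R$ and, once this is identified with the simple module $R_{A^{\ast}}$, full faithfulness on $Loc(R)\ni D(A)$ yields $A^{\ast\ast}\simeq \mathrm{REnd}_{A}(D(A))=\mathcal{E}nd_{A}(D(A))$ ($D(A)$ being h-injective); and the second assertion is exactly the biduality map $A\to DD(A)$, a quasi-isomorphism by exactness of $D$ and local finiteness of $H^{\ast}(A)$. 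That last paragraph is clean and complete.

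Two soft spots remain. First, the step you yourself flag --- that the $A^{\ast}$-action on $F(D(A))\simeq D(P_{A})$ factors through the augmentation $A^{\ast}\to R$ --- is genuinely the point where the augmented structure of the Koszul dual (Keller's ``augmented dg category'' $\mathcal{A}^{\ast}$) must be unwound; it is true here because $\mathrm{RHom}_{A}(R,D(A))$ is concentrated in degree $0$ so all higher $\mathrm{Ext}$'s act trivially for degree reasons, but you should say this rather than leave it as a promissory note, since it is the only place the simple-module structure is pinned down. Second, your explanation of where $i\dim R_{A}<\infty$ enters is not convincing: $A^{\ast\ast}=\mathcal{E}nd_{A^{\ast}}(P_{A^{\ast}})$ is defined unconditionally, so ``so that $A^{\ast\ast}$ is defined on the same symmetric footing'' cannot be its role. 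In Keller's formulation the finiteness of the injective dimension is part of the symmetric package (it controls $D(A)$ from the other side, e.g.\ via the dual filtration argument that the paper invokes through \cite[Theorem 3.2.c]{Keller1994} in Lemma \ref{l2}); if your argument genuinely never uses it, you should either locate where it is hidden or note explicitly that your proof only needs $p\dim R_{A}<\infty$, $D(A)\in Loc(R)$ and local finiteness. Neither issue is fatal, but both are exactly the places where a blind reconstruction can silently diverge from the cited source.
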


    Let $A$ be an augmented dg algebra over $R$ and $\pi$ denote the composition of the following morphisms $BA \xrightarrow{p} s\bar{A} \xrightarrow{s^{-1}} \bar{A} \xrightarrow{i} A$, where $BA$ is the bar construction of $A$, $p$ is the canonical projection and $i$ is the inclusion morphism. Define a new differential $d_{\pi}$ on $BA \otimes_{R} A$ as
    $$ d_{BA}  \otimes_{R} id_{A} + id_{BA} \otimes_{R} d_{A} + (id_{BA} \otimes_{R} \mu)(id_{BA} \otimes_{R} \pi \otimes_{R} id_{A})(\Delta_{BA} \otimes_{R} id_{A}),$$
    where $\mu$ is the multiplication of $A$ and $\Delta_{BA}$ is the comultiplication of $BA$. This new complex $(BA \otimes A, d_{\pi})$ is denoted by $BA \otimes_{\pi} A$ and is called the $augmented$ $bar$ $construction$ of $A$ (see \cite[Section 2.2.5]{Loday2012}). $BA \otimes_{\pi} A$ is a right dg module over $A$ and there is a quasi-isomorphism $\epsilon_{BA}\otimes \epsilon_{A}: BA \otimes_{\pi} A \to R$ (see \cite[Proposition 2.2.8]{Loday2012} or \cite[Propositon 19.2]{Felix2012}). In general, for a left dg module $(M,\mu_{M})$ over $A$ (where $\mu_{M}: A\otimes_{R} M\to M$ is the left $A$-module action), one can define a left dg comodules over $BA$ which is denoted by $BA\otimes_{\pi}M$ whose underlying graded spaces is $BA \otimes_{R} M$ and whose differential is $$d_{BA}\otimes id_{M} + id_{BA} \otimes d_{M} + (id_{BA} \otimes_{R} \mu_{M})(id_{BA} \otimes_{R} \pi \otimes_{R} id_{M})(\Delta_{BA} \otimes_{R} id_{M}).$$ Let $f:M\to N$ be a homomorphism of left dg $A$-modules, it's easy to check that $BA \otimes_{R} f: BA \otimes_{\pi} M \to BA \otimes_{\pi} N$ is a homomorphism of left dg comodules over $BA$. We get an exact functor $BA\otimes_{\pi}?$ from the category of left dg modules over $A$ to the category of left dg comodules over $BA$.

    Since $BA \otimes_{\pi} A$ is a homotopically  projection resolution of $R$, then $\mathcal{E}nd_{A}(BA \otimes_{\pi} A)$ is a Koszul dual of $A$. By the following lemma, we can take $D(BA)$ as a model of Koszul dual of $A$.
    In \cite{DiMingLu2008}, for an augmented $A_{\infty}$-algebra $A$ over $k$, they define the Koszul dual of $A$ as $D(BA)$. We need remark that since $R$ is a symmetric $k$-algebra, for a graded $R$-bimodule $M$, there is a series of isomorphisms of graded $R$-bimodules $D(M)\cong \mathcal{H}om_{-,R}(M,R) \cong \mathcal{H}om_{R,-}(M,R)$. The first isomorphism $D(M) \cong \mathcal{H}om_{-,R}(M,R)$ will be used in the following lemma.

    \begin{lem}[{\cite[section 19 exercise 4]{Felix2012}}]\label{l3.6}
        For an augmented dg algebra $A$ over $R$, there is a quasi-isomorphism $f\colon D(BA) \to \mathcal{E}nd_{A}(BA \otimes_{\pi} A)$ of dg algebras.
    \end{lem}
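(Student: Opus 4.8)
The plan is to write down the map $f$ explicitly, verify it is a morphism of dg algebras, and then deduce that it is a quasi-isomorphism by a two-out-of-three argument against the counit of the resolution. Since $BA\otimes_{\pi}A$ is, after forgetting the differential, the free right $A$-module $BA\otimes_{R}A$ on the graded $R$-module $BA$, restriction to $BA\otimes 1$ gives an isomorphism of graded spaces $\mathcal{E}nd_{A}(BA\otimes_{\pi}A)\cong \mathcal{H}om_{R}(BA, BA\otimes_{R}A)$. Using the isomorphism $D(BA)\cong \mathcal{H}om_{-,R}(BA,R)$ from the preceding remark, I would define, for a left $R$-linear map $\phi\colon BA\to R$,
$$f(\phi)=(id_{BA}\otimes_{R}\phi\otimes_{R}id_{A})(\Delta_{BA}\otimes_{R}id_{A})\colon BA\otimes_{R}A\to BA\otimes_{R}A,$$
which is right $A$-linear of degree $|\phi|$ because it leaves the $A$-tensorand untouched. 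In Sweedler notation this is the right convolution operator $f(\phi)(x\otimes a)=\sum x_{(1)}\,\phi(x_{(2)})\otimes a$.

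Next I would check that $f$ is a morphism of dg algebras. Multiplicativity $f(\phi\ast\psi)=f(\phi)\circ f(\psi)$, where $\ast$ is the convolution product on $D(BA)$ dual to $\Delta_{BA}$, follows from coassociativity of $\Delta_{BA}$: one expands $f(\phi)\circ f(\psi)$ and reassociates the iterated coproduct, landing (up to the standard Koszul signs) on $f$ of the convolution. Compatibility with differentials is the substantive point. On $D(BA)$ the differential is the graded dual of $d_{BA}$, while on $\mathcal{E}nd_{A}(BA\otimes_{\pi}A)$ it is the supercommutator $g\mapsto d_{\pi}\circ g-(-1)^{|g|}g\circ d_{\pi}$, where $d_{\pi}=d_{BA}\otimes_{R}id_{A}+id_{BA}\otimes_{R}d_{A}+\tau$ and $\tau$ is the twisting term built from $\pi$. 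I would compute this supercommutator term by term: the $d_{BA}\otimes_{R}id_{A}$ part reproduces $f(\pm\phi\circ d_{BA})$ up to cross terms that cancel because $d_{BA}$ is a coderivation for $\Delta_{BA}$, while the $id_{BA}\otimes_{R}d_{A}$ part and the twisting term $\tau$ together account, via the twisting-cochain (Maurer--Cartan) equation satisfied by $\pi$, for exactly the component of the dual of $d_{BA}$ coming from the multiplications of $A$. I expect the sign-careful bookkeeping in this last matching to be the main obstacle.

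For the quasi-isomorphism claim I would avoid computing homology directly and instead use the counit. There is a quasi-isomorphism $\epsilon:=\epsilon_{BA}\otimes\epsilon_{A}\colon BA\otimes_{\pi}A\to R$ of right dg $A$-modules, and $BA\otimes_{\pi}A$ is homotopically projective; hence post-composition $\epsilon_{\ast}\colon \mathcal{E}nd_{A}(BA\otimes_{\pi}A)\to \mathcal{H}om_{A}(BA\otimes_{\pi}A,R)$ is a quasi-isomorphism. A short computation using the counit identity shows $\epsilon\circ f(\phi)=\phi$ under the canonical isomorphism $\mathcal{H}om_{A}(BA\otimes_{\pi}A,R)\cong \mathcal{H}om_{R}(BA,R)=D(BA)$; here the twisting term and the $A$-internal part of $d_{\pi}$ drop out because the target $R$ is the augmentation module, so this identification is one of complexes carrying the standard differential of $D(BA)$. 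Thus $\epsilon_{\ast}\circ f=\mathrm{id}_{D(BA)}$, and since both $\mathrm{id}$ and $\epsilon_{\ast}$ are quasi-isomorphisms, so is the chain map $f$. Together with the previous paragraph this exhibits $f$ as the desired quasi-isomorphism of dg algebras.
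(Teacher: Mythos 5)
Your proposal is correct and follows essentially the same route as the paper: the map $f$ is the left action of $D(BA)\cong\mathcal{H}om_{-,R}(BA,R)$ on $BA\otimes_{\pi}A$ by convolution, one post-composes with the counit quasi-isomorphism $\epsilon_{BA}\otimes\epsilon_{A}$ (using homotopical projectivity of $BA\otimes_{\pi}A$) to get a quasi-isomorphism $g\colon\mathcal{E}nd_{A}(BA\otimes_{\pi}A)\to D(BA)$, and one checks $g\circ f=\mathrm{id}$. The only cosmetic difference is how the identification $\mathcal{H}om_{A}(BA\otimes_{\pi}A,R)\cong D(BA)$ is justified (you use the free-module adjunction and the vanishing of the twisting term against the augmentation module, the paper uses a short exact sequence with $(id\otimes i)^{\ast}=0$), and you spell out the dg-algebra-morphism verification that the paper leaves implicit.
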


    \begin{proof}
        The original statement of this lemma is for $R=k$, but the proof is similar. Here we give a sketch. First since $BA$ is a dg coalgebra and $R$ is a symmetric $k$-algebra, we have $D(BA)\cong \mathcal{H}om_{-,R}(BA,R)$ as dg algebras. $BA \otimes_{\pi} A$ is a left-$\mathcal{H}om_{-,R}(BA,R)$ right-$A$ bimodule. So there is a natural homomorphism of dg algebra $f\colon \mathcal{H}om_{-,R}(BA,R)\to \mathcal{E}nd_{A}(BA \otimes_{\pi} A)$. We need to show $f$ is a quasi-isomorphism. Since $BA \otimes_{\pi} A$ is the augment bar resolution of right dg module $R$ over $A$, there is a quasi-isomorphism $\epsilon_{BA} \otimes_{R} \epsilon \colon BA \otimes_{\pi} A \to R$, where $\epsilon_{BA}$ is the canonical projection $BA \to R$ and $\epsilon$ is the canonical projection $A \to R$. Apply the dg functor $\mathcal{H}om_{A}(BA \otimes_{\pi} A,?)$ to $\epsilon_{BA} \otimes_{R} \epsilon$ we get a quasi-isomorphism
        $\mathcal{E}nd_{A}(BA \otimes_{\pi} A) \to \mathcal{H}om_{A}(BA \otimes_{\pi} A,R)$ of $R$-bimodules.
        There is an exact sequence of right dg module over $A$
        $$ 0 \to BA \otimes_{\pi} \bar{A} \xrightarrow{id\otimes_{R} i} BA \otimes_{\pi} A \xrightarrow{id \otimes_{R}\epsilon} BA \otimes_{\pi} R \to 0.$$ Here $i$ is the inclusion $\bar{A} \to A$ and $BA \otimes_{\pi} R = BA \otimes_{R} R$.
        Apply $\mathcal{H}om_{A}(?,R)$ to the above exact sequence, we get the following exact sequence
        $$ 0 \to \mathcal{H}om_{A}(BA \otimes_{\pi} R,R) \rightarrow \mathcal{H}om_{A}(BA \otimes_{\pi} A ,R) \xrightarrow{(id\otimes_{R} i)^{\ast}}  \mathcal{H}om_{A}(BA \otimes_{\pi} \bar{A},R).$$
        Since the morphism $(id\otimes_{R} i)^{\ast}=0$ , we get $\mathcal{H}om_{A}(BA \otimes_{\pi} A,R)\cong \mathcal{H}om_{A}(BA \otimes_{R} R,R)\cong \mathcal{H}om_{R=A/\bar{A}}(BA \otimes_{R} R,R)=\mathcal{H}om_{-,R}(BA,R)\cong  \mathcal{H}om_{k}(BA,k)$. So there is a quasi-isomorphism $g\colon \mathcal{E}nd_{A}(BA \otimes_{\pi} A) \to \mathcal{H}om_{-,R}(BA,R)$. By directly computation we have $g \circ f =id$, so $f$ is a quasi-isomorphism.
    \end{proof}

    The following lemma is crucial in the proof of our main result.
    \begin{lem}\label{l2}
        Let $Q$ be a positively graded quiver and $A$ a path-$A_{\infty}$-algebra on $Q$. Suppose $U$ is the enveloping algebra of $A$ (see proposition \ref{p2.4}). Then $U$ is quasi-equivalent to $U^{\ast\ast}$.
    \end{lem}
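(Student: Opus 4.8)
The plan is to apply Lemma \ref{l2.5} directly to the enveloping dg algebra $U$, so that verifying its four hypotheses for $U$---namely $\mathrm{pdim}\,R_U<\infty$, $\mathrm{idim}\,R_U<\infty$, that $D(U)$ lies in the smallest triangulated subcategory of $\mathcal{D}(U)$ closed under direct sums and containing $R=kQ_0$, and that $H^*(U)$ is locally finite---immediately yields $U\simeq U^{\ast\ast}$. Since $U=\Omega BA$ is unwieldy, I would not argue with it directly but transport each condition along the natural augmentation-preserving $A_\infty$-quasi-isomorphism $f\colon A\to U$ of Proposition \ref{p2.4}. By Proposition \ref{p2.10} this $f$ induces a triangle equivalence $\mathcal{D}(U)\xrightarrow{\sim}\mathcal{D}_{\infty}(A)$ sending $R$ to $R$ and $U$ to $A$, so the homological conditions above may be checked in $\mathcal{D}_{\infty}(A)$; moreover $H^*(U)\cong H^*(A)=kQ$ because the path $A_\infty$-algebra $A$ is minimal. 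The local finiteness of $H^*(U)$ is then immediate: since $Q$ is finite and positively graded, every path of internal degree $d$ has length at most $d$ and there are only finitely many paths of bounded length, so $kQ$ is locally finite.

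For the projective dimension I would exploit that the underlying associative algebra $kQ$ is hereditary. Using $\mathrm{Tor}^A(R,R)\cong H^*(BA)$, filter the bar complex $BA=T_{R}(s\bar A)$ increasingly by tensor weight. Because $A$ is minimal, its bar differential is $d=d_2+d_3+\cdots$ with $d_n$ lowering weight by $n-1$; hence the first nonzero differential of the weight spectral sequence is $d_2$, whose homology is the ordinary associative bar homology $\mathrm{Tor}^{kQ}(R,R)$, concentrated in weights $0$ and $1$ since $kQ$ has global dimension at most $1$. All higher differentials come from $m_3,m_4,\dots$ and strictly lower weight by at least $2$, so they vanish on classes living in weights $0,1$; the spectral sequence degenerates and $\mathrm{Tor}^A(R,R)$ is concentrated in weights $0,1$. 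Thus $\mathrm{pdim}\,R_U=\mathrm{pdim}\,R_A\le 1$, and applying the same argument to the opposite algebra $kQ^{\mathrm{op}}$ (again a path algebra of a finite positively graded quiver) gives $\mathrm{idim}\,R_U<\infty$.

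It remains to place $D(U)$ in the required subcategory. Under $\mathcal{D}(U)\simeq\mathcal{D}_\infty(A)$ the module $D(U)$ corresponds to the $A_\infty$-module $D(A)=D(kQ)$, which---$Q$ being positively graded---is locally finite and concentrated in non-positive degrees, each graded piece being a finite direct sum of the simple summands $S_i$ of $R$. Filtering $D(kQ)$ by internal degree realizes it as a convergent iterated extension of shifted copies of the $S_i$, whence $D(U)$ lies in the smallest triangulated subcategory of $\mathcal{D}(U)$ closed under direct sums and containing $R$. With all four hypotheses verified, Lemma \ref{l2.5} yields $U\simeq U^{\ast\ast}$. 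The main obstacle I anticipate is analytic rather than formal: the bar construction $BA$ (and hence $U$) is typically not locally finite when $Q$ has arrows of degree $1$, so both the degeneration of the weight spectral sequence and the filtration realizing $D(kQ)$ must be shown to converge. This is exactly where the positivity of the grading is indispensable, since it bounds the relevant filtrations and lets the finite associative data control the $A_\infty$-enhanced invariants; the careful bookkeeping of weight against internal degree is the technical heart of the argument.
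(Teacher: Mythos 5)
Your overall strategy coincides with the paper's: both proofs reduce the statement to Lemma \ref{l2.5} applied to $U$ and then verify its hypotheses, and your treatment of the local finiteness of $H^{*}(U)\cong kQ$ and of the condition $D(U)\in Loc(R)$ (transport to the derived category of $A$ via Proposition \ref{p2.10}, replace $D(U)$ by its minimal model with underlying space $D(kQ)$, filter by internal degree into shifted copies of the simples, and assemble the pieces) is essentially the one in the paper. The paper makes the ``convergence'' you worry about precise with the telescope triangle over $\bigoplus_{p}F_{p}$; the one point you should add there is the verification that each $F_{p}$ is actually an $A_{\infty}$-submodule, which uses both that $m_{n}$ has degree $2-n$ and that the arrows of $Q$ sit in strictly positive degree. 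Where you genuinely diverge is the finiteness of $\mathrm{pdim}\,R_{U}$ and $\mathrm{idim}\,R_{U}$: you compute $\mathrm{Tor}^{A}(R,R)=H^{*}(BA)$ via the weight spectral sequence of the bar construction and use heredity of $kQ$ to get concentration in weights $0,1$, whereas the paper applies Keller's lifting theorem \cite[Theorem 3.1.c]{Keller1994} to the length-one projective resolution $0\to V\otimes_{R}kQ\to kQ\to R\to 0$ to produce a homotopically projective resolution $P\to R_{U}$ carrying a two-step filtration with subquotients $U$ and $(V\otimes_{R}U)[1]$. This is not merely stylistic: Keller's projective dimension (\cite[Section 10.4]{Keller1994}) is \emph{defined} by the existence of exactly such a finitely filtered resolution, so your Tor computation --- which is correct as far as it goes, since the weight filtration is increasing, exhaustive and bounded below, and degeneration at $E_{2}$ is automatic once everything lies in weights $0$ and $1$ --- still has to be converted into a filtered resolution, and the standard device for doing so is precisely the lifting theorem you are bypassing. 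The same caveat applies to your dual claim for $\mathrm{idim}$ via $kQ^{\mathrm{op}}$. So the proposal is sound in outline, but to close it you should either invoke \cite[Theorems 3.1.c and 3.2.c]{Keller1994} directly, as the paper does, or supply the argument that weight-bounded $\mathrm{Tor}$ yields the finitely filtered homotopically projective (resp.\ injective) resolution that Keller's definitions require.
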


    \begin{proof}
        Since $U$ is the enveloping algebra of $A$, U is an augmented dg algebra over $R:=kQ_{0}$ and the homology $H^{\ast}(U)\cong H^{\ast}(A)=kQ$ is locally finite. By Lemma \ref{l2.5} we only need to show $p\emph{dim} R_{U} < \infty$, $i\emph{dim} R_{U} < \infty$ and $D(A)$ belongs to the smallest triangulated subcategory of $\mathcal{D}(U)$ closed under direct sums and containing $R$.

        Let $V$ denote the graded vector space generated by all arrows in $kQ$ and $i\colon V \to U$ be a section of $R$-bimodule. By a slight abuse of notation, the image of $i$ is also denoted by $V$. We have a projective resolution of $R$ over $kQ$ as follow:
        $$ 0 \to V \otimes_{R} kQ \xrightarrow{\mu} kQ \xrightarrow{p} R \to 0$$
        where $\mu$ is the multiplication of $kQ$ and $p$ is the canonical projection. Because $H^{\ast}U \cong kQ$ as graded algebras, by \cite[Theorem 3.1.c]{Keller1994}, the projective resolution of $R$ yields a homotopically  projective resolution $P \to R$
        such that $P$ admits a filtration right dg $U$-modules $$0 \subseteq F_{0}=U \subseteq F_{1}=P$$ with $P/A \cong (V \otimes_{R} U)[1]$ in the homotopy category of right dg module over $U$. This implies that $p\emph{dim} R < \infty$. Dually using \cite[Theorem 3.2.c]{Keller1994}, one can show that $i\emph{dim} R < \infty$.

        We view $U$ as an augmented $A_{\infty}$-algebra over $kQ_{0}$ and $A$ as a minimal model of $U$. For convenience, we make the following convention.
        We use $\mathcal{D}(U)$ to denote the derived category of right dg modules over $U$ and $\mathcal{D}(A)$ the derived category of right strictly unital $A_{\infty}$-modules over $A$. $Loc(R)$ ($resp$. $Loc(R_{A})$) denote the the smallest triangulated subcategory of $\mathcal{D}(U)$ ($resp$.  $\mathcal{D}(A)$) closed under arbitrary direct sums and containing $R$.

        It remains to show that $D(U) \in Loc(R)$. Since $U$ is a dg algebra and $A$ is a minimal model of $U$, by Proposition \ref{p2.10} and Remark \ref{rmk2.8}, there is a triangulated equivalence between $\mathcal{D}(U)$ and $\mathcal{D}(A)$ which induces a triangulated equivalence between $Loc(R)$ and $Loc(R_{A})$. To show $D(U) \in Loc(R)$ is equivalent to show that $D(U)$, which is viewed as an right $A_{\infty}$-module over $A$, belongs to $Loc(R_{A})$.

        By Proposition \ref{p2.11}, let $(H^{\ast}(D(U)), m_{2},m_{3},\ldots)$ be a minimal model of $D(U)$ as right $A_{\infty}$-module over $A$. Since $H^{\ast}(D(U))$ is isomorphic to $D(kQ)$ as graded $R$-bimodule, $H^{\ast}(D(U))$ is concentrated in nonpositive degrees. Note that $m_{n}$ is of degree $2-n$ for $n \geq 2$, it is easy to see $H^{\ast}(D(U))$ admits a filtration of $A_{\infty}$-submodules $\{F_{p}\}$ for $p \geq 0$ defined as follow: $F_{0}= D(R) \cong R$, $F_{p}=\bigoplus_{n\ge -p} H^{\ast}(D(U))^n$ for $p > 0$. By construction $F_{p+1}/F_{p}$ is concentrated in degree $-p-1$ for $p \geq 0$. Therefore $F_{p+1}/F_{p} \in \mathrm{Add}(R)$ and there is a series of  exact sequences of $A_{\infty}$-modules
        $$ 0 \to F_{p} \xrightarrow{i} F_{p+1} \to F_{p+1}/F_{p} \to 0$$ for $p \geq 0$.
        By \cite[Proposition 5.2]{Keller1999}, we obtain the following series of triangles in $D(A)$:
        $$ F_{p} \to F_{p+1} \to F_{p+1}/F_{p} \to F_{p}[1]$$
        for $p \geq 0$. By induction, all $F_{p} \in Loc(R_{A})$ for $p \geq 0$. Since $\lim\limits_{\longrightarrow} F_{p} = H^{\ast}(D(U))$, we get a triangle
        $$ \bigoplus_{p \geq 0} F_{p} \xrightarrow{\Phi} \bigoplus_{p \geq 0} F_{p} \to H^{\ast}(D(U)) \to (\bigoplus_{p \geq 0} F_{p})[1]$$
        in $D(H^{\ast}(U))$, where $\Phi$ has components
        $F_p\xrightarrow{[1, -i]} F_p\bigoplus F_{p+1}\to \bigoplus_{q\ge0} F_q$. This implies that $H^{\ast}(D(U)) \in Loc(R_{A})$. In addition, $D(U)$ is isomorphic to $H^{\ast}(D(U))$ in $\mathcal{D}(A)$, so $D(U) \in Loc(R_{A})$. Therefore $D(U) \in Loc(R)$.
    \end{proof}

    \begin{thm}\label{t1}
        Let $(A, m_{2}, \ldots)$ be a path $A_{\infty}$-algebra over a positively graded quiver $Q$. Then $A$ is quasi-equivalent to $kQ$ which is viewed as a dg algebra with trivial differential. In particular, the derived category of right strictly unital $A_{\infty}$-modules over $A$ is triangulated equivalent to the derived category of right dg module over $kQ$.
    \end{thm}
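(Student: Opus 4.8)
Throughout write $\simeq$ for quasi-equivalence of dg algebras. By the definition of quasi-equivalence between an $A_{\infty}$-algebra and a dg algebra recalled in the introduction, the assertion ``$A$ is quasi-equivalent to $kQ$'' is, by definition, the assertion that the enveloping algebra $U$ of $A$ (Proposition \ref{p2.4}) satisfies $U \simeq kQ$. The plan is therefore to prove $U \simeq kQ$, and the whole point is that Lemma \ref{l2} already supplies one half of this, namely $U \simeq U^{**}$. What remains is to compute the Koszul dual $U^{*}$, to recognise it as the Koszul dual of $kQ$, and then to dualise once more; schematically I aim to run the chain $A \simeq U \simeq U^{**} \simeq (kQ)^{**} \simeq kQ$.

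First I would compute the single Koszul dual. Since $U = \Omega BA$, the bar--cobar counit gives a quasi-isomorphism of dg coalgebras $BU = B\Omega BA \to BA$, and because $H^{*}(A) = kQ$ is locally finite the functor $D(-)$ turns it into a quasi-isomorphism; combined with Lemma \ref{l3.6} this yields $U^{*} \simeq D(BU) \simeq D(BA)$. Its cohomology can be read off from the resolution built in the proof of Lemma \ref{l2}: the projective resolution $0 \to V \otimes_{R} kQ \to kQ \to R \to 0$ lifts to a length-one homotopically projective resolution $P \to R$ over $U$ with $P/U \cong (V \otimes_{R} U)[1]$, so applying $\mathcal{H}om_{U}(-,R)$ shows that $H^{*}(U^{*}) \cong \mathrm{Ext}^{*}_{kQ}(R,R) =: E$, a bigraded algebra concentrated in cohomological degrees $0$ and $1$ with $E^{0}=R$ and $E^{1}=D(V)$.

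The step I expect to be the main obstacle is upgrading this cohomological identification to an honest quasi-equivalence $U^{*} \simeq E$, i.e. proving that $U^{*}$ is formal. Here the positivity of $Q$ enters essentially: besides the cohomological grading, $E$ carries the Adams grading coming from the path length of $Q$, and any higher operation $m_{n}$ ($n \geq 3$) on a minimal model of $U^{*}$ would have to preserve the Adams grading while lowering cohomological degree by $n-2>0$. A bidegree bookkeeping shows that, because the arrows of $Q$ sit in strictly positive degree and $E$ lives only in cohomological degrees $0$ and $1$, no nonzero operation survives these constraints; hence $U^{*}$ is intrinsically formal and $U^{*} \simeq E$. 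Running the same computation for $kQ$ itself gives $(kQ)^{*} \simeq E$ as well, so $U^{*} \simeq (kQ)^{*}$.

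It remains to dualise a second time. Applying Lemma \ref{l2.5} to $kQ$ --- whose hypotheses (finiteness of the projective and injective dimensions of $R$, local finiteness of $kQ$, and $D(kQ)$ lying in the localizing subcategory generated by $R$) are checked exactly as in the proof of Lemma \ref{l2}, using that $kQ$ is hereditary --- yields $kQ \simeq (kQ)^{**}$. Since the Koszul dual is functorial up to quasi-equivalence, $U^{*} \simeq (kQ)^{*}$ propagates to $U^{**} \simeq (kQ)^{**}$, and the chain
$$ A \simeq U \simeq U^{**} \simeq (kQ)^{**} \simeq kQ $$
closes, proving the first assertion. For the ``in particular'', quasi-equivalent dg algebras have triangulated equivalent derived categories, while Remark \ref{rmk2.8} together with Proposition \ref{p2.10} identifies the derived category of strictly unital $A_{\infty}$-modules over $A$ with $\mathcal{D}(U)$; combining these gives the triangulated equivalence between $\mathcal{D}(A)$ and $\mathcal{D}(kQ)$.
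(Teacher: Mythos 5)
Your overall strategy agrees with the paper's up to the computation of $H^{*}(U^{*})\cong R\oplus(D(V))[-1]$, which you carry out essentially as the paper does. The fatal step is the claim that $U^{*}$ is \emph{intrinsically formal}. This is false in general, and Example \ref{exm1} of the paper is a counterexample: there the minimal model $A'$ of $U^{*}$ has augmentation ideal $D(V)[-1]$ spanned by $\alpha^{*},\beta^{*},\gamma^{*}$ in total degree $0$ and $\tau^{*}$ in total degree $-1$, and Koszul duality forces a nonzero $m_{3}$ sending $\gamma^{*}\otimes\beta^{*}\otimes\alpha^{*}$ to $\tau^{*}$ (degree $0+0+0+(2-3)=-1$, so the single cohomological grading permits it); indeed, if $A'$ were formal then $U^{**}$ would be the Koszul dual of the associative algebra $R\oplus D(V)[-1]$ with zero products on its radical, i.e.\ $kQ$ with trivial higher products, contradicting the fact that $U^{**}$ recovers the non-formal $A$. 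The Adams (path-length) bigrading you invoke to kill the higher operations does not exist on $U$ or $U^{*}$: a path $A_{\infty}$-algebra is only required to be graded by the quiver grading, and in Example \ref{exm1} the operation $m_{3}(\alpha,\beta,\gamma)=\tau$ takes three length-one paths to a length-one path, so path length is not preserved and does not descend to a constraint on the Koszul dual. Consequently the link $U^{*}\simeq E\simeq(kQ)^{*}$ breaks, and with it the chain $U^{**}\simeq(kQ)^{**}\simeq kQ$.

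The paper avoids this issue entirely, and the way it does so is the real content of the proof: it never claims $U^{*}$ is formal. It takes the minimal model $A'$ of $U^{*}$, with whatever higher products it carries, and applies $D(B(-))$ once more. The point is that the \emph{underlying graded bimodule} of $D(BA')$ is $D(T_{R}(sD(V)[-1]))\cong T_{R}(V)=kQ$ independently of the higher structure on $A'$; the higher products only enter through the differential of $D(BA')$. Since Lemma \ref{l2} gives $H^{*}(U^{**})\cong kQ$, a dimension count in each (finite-dimensional) graded piece forces that differential to vanish, and the multiplication, being dual to the tensor-coalgebra comultiplication, is that of $kQ$. If you want to repair your argument you must replace the formality claim by this (or an equivalent) second dualization step; the rest of your outline, including the identification of $\mathcal{D}(A)$ with $\mathcal{D}(U)$ via Remark \ref{rmk2.8} and Proposition \ref{p2.10}, is fine.
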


    \begin{proof}
        The main technique we use here is Koszul duality for dg algebras. $A$ is an augmented $A_{\infty}$-algebra over $R:=kQ_{0}$, let $U$ be the enveloping algebra of $A$. By definition we need to show that $U$ is quasi-equivalent to $kQ$. By Proposition \ref{p2.4}, there is an $A_{\infty}$-quasi-isomorphism of augmented $A_{\infty}$-algebras $f\colon A \to U$ which preserves augmentations. We choose a `proper' homotopically  projection resolution of $R$ to compute the Koszul dual of $U$. Let $V$ denote the graded vector space generated by all arrows in $kQ$.

        Because $H^{\ast}U \cong H^{\ast}A \cong kQ$ as graded algebras, as proved in Lemma \ref{l2}, there is a homotopically projective resolution $P \to R_{U}$
        such that $P$ admits a filtration of right dg $U$-modules $$0 \subseteq F_{0}=U \subseteq F_{1}=P$$ such that $P/U \cong (V \otimes_{R} U)[1]$.
        We mention that in Keller's construction (see \cite[Theorem 3.2.c]{Keller1994}) one can choose $P$ to be an $R$-bimodule and all morphisms in the above triangle to be homomorphisms of $R$-bimodules. By definition $U^{\ast}=\mathcal{E}nd_{U}(P)$.

        Next we compute the cohomology of $U^{\ast}$. We have the following  triangle
        $$ U \to P \to (V \otimes U)[1] \to U[1] $$ in $\mathcal{D}(U)$, the derived category of right dg modules over $U$. Applying $\mathcal{D}(U)(?,R)$ to this triangle we get a long exact sequence of $R$-bimodules:
        $$
          \xymatrix@C=0.25cm{
          \cdots \ar[r] & \mathcal{D}(U)((V \otimes_{R} U)[n+1], R) \ar[rr] && \mathcal{D}(U)(P[n], R) \ar[rr] && \mathcal{D}(U)(U[n], R) \ar[r] & \cdots }
        $$
        We have
        \begin{equation*}
          \begin{split}
            \mathcal{D}((V \otimes U)[n+1], R) &= H^{-n-1} \mathcal{H}om_{U}(V \otimes U, R) \\
                                               &\cong H^{-n-1} \mathcal{H}om_{R}(V, \mathcal{H}om_{U}(U,R)) \\
                                               &= H^{-n-1} \mathcal{H}om_{R}(V, R)
          \end{split}
        \end{equation*}
        Since $R$ is a dg $U$-module concentrated in degree $0$, we also have $\mathcal{D}_{U}(U[n], R) = H^{-n}(R) = 0$ if $n \neq 0$ and $\mathcal{D}_{U}(U, R) =R$. Together with $D(V) = \mathcal{H}om_{R}(V,R)$ is concentrated in negative degrees with trivial differential, we get that $H^{\ast}(U^{\ast}) \cong \oplus_{n \in \mathds{Z}}\mathcal{D}_{U}(P, R[n]) \cong R \oplus (D(V))[-1]$ as graded $R$-bimodule. By Lemma \ref{l3.6}, $D(BU)$ is quasi-isomorphic to $U^{\ast}$ and $D(BU)$ is augmented, it follows that the minimal model $A'$ of $D(BU)$ is an augmented $A_{\infty}$-algebra with underlying graded $R$-bimodule $R \oplus (D(V))[-1]$.

        Next we prove that the multiplications except $m_{2}$ of $A'$ vanish. By Lemma \ref{l2}, $U$ is quasi-equivalent to $U^{\ast\ast}$, which implies that $H(U^{\ast\ast}) \cong kQ $ as $R$-bimodules. Since $D(B(D(BU)))$ is quasi-isomorphic to $U^{\ast\ast}$ and $D(B(?))$ preserves $A_{\infty}$-quasi-isomorphism, we have $D(BA')$ is quasi-isomorphic to $U^{\ast\ast}$. By direct computation $D(BA') \cong kQ $ as $R$-bimodule (this is because the augmentation ideal of $A'$ is isomorphic to $(D(V)[-1]$ as $R$-bimodule), it forces that the differential of $D(BA')$ vanishes and $D(BA') \cong kQ$ as dg algebra. Combining the facts $U$ is quasi-equivalent to $U^{\ast\ast}$ and $U^{\ast\ast}$ is quasi-isomorphic to $D(BA')$, we obtain that $U$ is quasi-equivalent to $kQ$.
    \end{proof}

    Let $\mathcal{T}$ be a compactly generated (k-linear) algebraic triangulated category. Let $\mathbb{S}$ denote a set of compact generators of $\mathcal{T}$. Assume that $\mathbb{S}$ is a finite set and objects of $\mathbb{S}$ are pairwise non-isomorphic. Let $A_{\mathcal{T}}$ denote the graded algebra structure on the graded vector space $\bigoplus\limits_{n \in \mathbb{Z}}\bigoplus\limits_{X,Y \in \mathbb{S}}Hom_{T}(X, \sum^{n}Y)$ whose multiplication is induced by the composit of morphisms in $\mathcal{T}$.

    As an application of Theorem \ref{t1} , we have the following corollary.

    \begin{cor}
        Let $\mathcal{T}$ be a (k-linear) algebraic triangulated category compactly generated by a finite set of objects.  $A_{\mathcal{T}}$ is defined as above. If $A_{\mathcal{T}}$ is isomorphic to a path algebra of the positively graded quiver Q, then $\mathcal{T}$ is triangulated equivalent to the derived category of $kQ$ viewed as a dg algebra with trivial differential.
    \end{cor}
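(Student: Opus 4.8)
The plan is to realize $\mathcal{T}$ as the derived category of a dg algebra whose cohomology is $A_{\mathcal{T}} \cong kQ$, replace that dg algebra by its minimal model (which turns out to be a path $A_{\infty}$-algebra over $Q$), and then quote Theorem \ref{t1}.

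First I would use that $\mathcal{T}$ is algebraic and compactly generated by the finite set $\mathbb{S}$. Fixing a dg enhancement of $\mathcal{T}$ and letting $E$ be the dg endomorphism algebra of a cofibrant model of the compact generator $\bigoplus_{X \in \mathbb{S}} X$, the Morita-type theorem for algebraic triangulated categories of \cite{Keller1994} yields a triangulated equivalence $\mathcal{T} \simeq \mathcal{D}(E)$ carrying the generators to the free modules. Here $E$ is naturally a dg algebra over $R := \prod_{X \in \mathbb{S}} k$, the orthogonal idempotents being the identities of the objects of $\mathbb{S}$, and by construction $H^{\ast}(E) \cong A_{\mathcal{T}} \cong kQ$ as graded algebras. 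Because $Q$ is positively graded, the degree-zero part of $kQ$ is exactly $kQ_{0} = R$, so the identification $R \cong kQ_{0}$ matches the idempotents attached to $\mathbb{S}$ with the vertices of $Q$; in particular $E$ is augmented over $R$ and, up to quasi-isomorphism, concentrated in nonnegative degrees.

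Next I would pass to a minimal model. Applying Lemma \ref{minimal-model} to the augmented dg algebra $E$ produces a minimal augmented $A_{\infty}$-algebra $A = (H^{\ast}(E), m_{2}, m_{3}, \ldots)$ together with a strictly unital $A_{\infty}$-quasi-isomorphism $A \to E$. Since $H^{\ast}(E) \cong kQ$ as graded algebras, $m_{2}$ is the multiplication of $kQ$, and $A$ is minimal and strictly unital over $kQ_{0}$; hence $A$ is a path $A_{\infty}$-algebra over $Q$ in the sense of Definition \ref{def3.2}. By Remark \ref{rmk2.8} and Proposition \ref{p2.10}, the quasi-isomorphism $A \to E$ induces a triangulated equivalence $\mathcal{D}(E) \simeq \mathcal{D}(A)$ between dg $E$-modules and strictly unital $A_{\infty}$-modules over $A$. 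Finally, Theorem \ref{t1} gives $\mathcal{D}(A) \simeq \mathcal{D}(kQ)$ with $kQ$ carrying the trivial differential, so concatenating the equivalences yields $\mathcal{T} \simeq \mathcal{D}(E) \simeq \mathcal{D}(A) \simeq \mathcal{D}(kQ)$, as claimed.

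The main obstacle, and the step demanding the most care, is the first one: extracting from the purely graded-algebraic datum $A_{\mathcal{T}}$ an honest dg (or $A_{\infty}$) model $E$ with $H^{\ast}(E) = A_{\mathcal{T}}$ and $\mathcal{T} \simeq \mathcal{D}(E)$. This is precisely where the hypothesis that $\mathcal{T}$ be \emph{algebraic} is used: it guarantees a dg enhancement, so that the composition product on $A_{\mathcal{T}}$ lifts to a genuine dg multiplication and \cite{Keller1994} applies. A secondary point is to confirm that the minimal model $A$ meets every requirement of Definition \ref{def3.2} — strict unitality and compatibility of the augmentation with the grading induced by $Q$ — but this follows formally from Lemma \ref{minimal-model} together with the positivity of the grading of $Q$.
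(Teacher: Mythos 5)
Your proposal is correct and follows essentially the same route as the paper: both realize $\mathcal{T}$ as the derived category of a minimal, strictly unital $A_{\infty}$-model of the endomorphism algebra of the finite set of compact generators, observe that this model is a path $A_{\infty}$-algebra over $Q$ because its underlying graded algebra is $kQ$, and then invoke Theorem \ref{t1} (the paper packages the first step as a single citation of Lef\`evre-Hasegawa's Theorem 7.6.0.4 for $A_{\infty}$-categories, whereas you unpack it as Keller's dg Morita theorem followed by Lemma \ref{minimal-model}). The only point to tighten is your assertion that $E$ itself is augmented over $R$ and nonnegatively graded: the endomorphism dg algebra need not be either on the nose, so one should first form the strictly unital minimal model over the semisimple base $R$ and only then read off the augmentation from the fact that the resulting underlying graded algebra $kQ$ is concentrated in nonnegative degrees with degree-zero part $R$.
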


    \begin{proof}
        By \cite[Theorem 7.6.0.4]{Lefevre-Hasegawa2003}, there is a minimal strictly unital $A_{\infty}$-category (see \cite[Section 7.2]{Keller1999}) $\mathcal{A}$ whose set of objects is $\mathbb{S}$ such that $\mathcal{T}$ is triangulated equivalent to a derived category $D_{\infty}(\mathcal{A})$ of strictly unital $A_{\infty}$-modules over $\mathcal{A}$. By construction, $Hom_{\mathcal{A}}(X,Y)^{i} \cong Hom_{\mathcal{T}}(X,\sum^{n} Y)$ for $X,Y \in \mathbb{S}$. Let $R=\prod_{X\in obj \mathcal{A}} k$ and $\tilde{A}= \bigoplus\limits_{X,Y\in \mathbb{S}} Hom_{\mathcal{A}}(X,Y)$ be the Auslander algebra of $\mathcal{A}$. Then $\tilde{A}$ is a minimal strictly unital $A_{\infty}$-algebra over $R$ and $D_{\infty}(A)$ is triangulated-equivalent to $D_{\infty}(\mathcal{A})$. Since $\tilde{A}\cong A_{\mathcal{T}}$ as graded algebra, $kQ_{0}\cong R$ and $\tilde{A}$ concentrated in nonnegative degrees with 0th degree $R$. Then $\tilde{A}$ is augmented over $R$ and is a path $A_{\infty}$-algebra of positively graded quiver $Q$. Then by Theorem \ref{t1}, $\tilde{A}$ is quasi-equivalent to $A_{\mathcal{T}}$. So $\mathcal{T}$ is triangulated equivalent to the derived category of $A_{\mathcal{T}}$ viewed as a dg algebra with trivial differential.
    \end{proof}

\section*{Acknowledgements}
The author is grateful to Professor Dong Yang for his instructions and suggestions, to Professor Xiao-Wu Chen for his initial idea of this paper and to Professor Yu Ye for the guidance over the past years.

\end{document}